\newtheorem{prop}{Proposition}[section]
\newtheorem{lemma}[prop]{Lemma}
\newtheorem{them}[prop]{Theorem}
\theoremstyle{definition}
\newtheorem{defini}[prop]{Definition}
\newtheorem{example}[prop]{Example}
\newtheorem{remark}[prop]{Remark}
\newcommand{\ep}{\varepsilon}
\newcommand{\RR}{\mathbb R}
\newcommand{\CC}{\mathbb C}
\newcommand{\ZZ}{\mathbb Z}
\newcommand{\grad}{{\rm grad\hskip0.3mm}}
\newcommand{\ind}{{\rm ind}}
\newcommand{\Crit}{{\rm Crit}}
\newcommand{\UL}{\overrightarrow{ab}}
\newcommand{\LU}{\overrightarrow{ba}}
\newcommand{\UU}{\overrightarrow{aa}}
\newcommand{\LL}{\overrightarrow{bb}}
\title{
Gluing formula for an invariant related to
the Chern-Simons perturbation theory}
\author{
Teruaki Kitano
}
\address{Department of Information Systems Science, 
Faculty of Science and Engineering, 
Soka University, 
Tangi-cho 1-236, 
Hachioji, Tokyo 192-8577, Japan}
\email{kitano@soka.ac.jp}
\author{
Tatsuro Shimizu
}
\address{
School of System Design and Technology,
Tokyo Denki University
, 
5 Senju Asahi-cho, Adachi-ku, Tokyo 120-8551, Japan}
\email{shimizu@mail.dendai.ac.jp}
\thanks{2020 {\it Mathematics Subject Classification}. 57K31, 57M05.}
\begin{document}
\maketitle

\bigskip
\begin{abstract}
The Chern-Simons perturbation theory gives an invariant $d(M,\rho)$ for a pair of a closed oriented 3-manifold $M$ and a representation $\rho$ of the fundamental group.
We generalize $d(M,\rho)$ for compact oriented 3-manifolds with torus boundaries. 
Further we give a gluing formula for $d(M,\rho)$.

\end{abstract}

\large


\section{Introduction}
The Chern-Simons perturbation theory was established by M.~Kontsevich \cite{Kon}, S.~Axelrod and I.~M.~Singer \cite{AS}.
It gives invariants for a pair $(M,\rho)$ of a closed oriented 3-manifold $M$ and a representation $\rho$ of the fundamental group. 
Here a representation $\rho$ is assumed to be acyclic, that is,
all homology groups with the local coefficient system given by $\rho$ are vanishing.
In the Chern-Simons perturbation theory, invariants are formulated
by using the configuration space integral.
For a Jacobi diagram $\Gamma$, the configuration space integral gives   
a homology class $I_{\Gamma}(M,\rho)$.
Each $I_{\Gamma}(M,\rho)$ depends on a choice of a propagator which is the integrand of the configuration space integral. However, certain weighted sums of $I_{\Gamma}(M,\rho)$ for Jacobi diagrams give invariants of $(M,\rho)$.
Here is a sketch of the construction of the Chern-Simons perturbation theory.

The configuration space integral makes sense even for graphs rather than Jacobi diagrams. 
The invariant discussed in this article can be considered as
the configuration space integral $I_{\circ}(M,\rho)$ for the graph $\bigcirc$ with only one vertex and one edge.
Remark that the graph $\bigcirc$ is not a Jacobi diagram.
However $\bigcirc$ often appears in many Jacobi diagrams as a sub-graph.
The configuration space integral $I_{\circ}(M,\rho)$ can be considered as a primitive part of the Chern-Simons perturbation theory.

A homology class $I_{\circ}(M,\rho)$ is in the 1-dimensional homology group 
$H_1(M;V_{\rho^*}\otimes V_{\rho})$ of $M$ with local coefficients.
Here $V_{\rho}$ and $V_{\rho^*}$ are local systems on $M$ given by $\rho$ and $\rho^*$, which is the dual representation of $\rho$.
It can be seen that $I_{\circ}(M,\rho)$ depends on a choice of a propagator, but
this ambiguity of $I_{\circ}(M,\rho)$ is only in $H_1(M,\ZZ)$. 
Then $I_{\circ}(M,\rho)$ modulo $H_1(M;\ZZ)$ is an invariant of a pair $(M,\rho)$:
$$d(M,\rho)=I_{\circ}(M,\rho)~~\text{mod}~~ H_1(M;\ZZ)
\in H_1(M;V_{\rho}^*\otimes V_{\rho})/H_1(M;\ZZ).$$

The invariant $d(M,\rho)$ was studied as an invariant first by C. Lescop \cite{Lescop1} for $M$ with the first Betti number $b_1(M)=1$ and the abelian representation. It is denoted by $I_{\Delta}(M)$ in \cite{Lescop1}.
A. Cattaneo and the second author also studied $d(M,\rho)$ for $M$ with $b_1(M)>1$ and an abelian representation $\rho$ \cite{ShimizuJDG}, \cite{CS}.
We note that the invariant $d(M,\rho)$ plays a role as a kind of a defect in the Chern-Simons perturbation theory
(see Remark 2.4 of \cite{CS} for example).

It is expected that the invariant $d(M,\rho)$ and the Reidemeister torsion ${\rm Tor}(M,\rho)$ of $\rho$ are essentially equivalent.  
Actually, for an acyclic abelian representation $\rho$ on $M$ with $b_1(M)=1$, the equivalence was proved by Lescop \cite{Lescop1}, M.~Hatchings and Y.~J.~Lee \cite{HL}, \cite{HL2}, \cite{HL3}.
After that it was proved 
for $b_1(M)>1$ by the second author \cite{Shimizu}.
More precisely, $d(M,\rho)$ can be computed from the Reidemeister torsion by using a variant of logarithmic derivatives in the above cases. 
We note that
the origin of the Chern-Simons perturbation theory is Witten's proposal on the partition function of the Chern-Simons quantum field theory  \cite{Witten}.
It is expected that invariants coming from the partition function of the Chern-Simons quantum field theory have an information of the Reidemeister torsion. 

In this article, we generalize the invariant $d(M,\rho)$ for compact oriented 3-manifolds $M$ such that the boundary of $M$ is a disjoint union of tori by using a Morse theoretic description.
Further we obtain a gluing formula for $d(M,\rho)$ as follows:
Let $N_a, N_b$ be compact oriented 3-manifolds with torus boundaries
and let $\rho_a,\rho_b$ be representations of the fundamental groups of $N_a,N_b$ respectively.
Let $M=N_a\cup N_b$ be a closed manifold given by gluing $N_a$ and $N_b$ along the boundary.
We assume that there is a representation $\rho$ of the fundamental group of $M$ so that $\rho|_{N_a}=\rho_a$ and $\rho|_{N_b}=\rho_b$.
The representations $\rho_a,\rho_b$ and $\rho$ are assumed to be acyclic.
Then it follows from that the representation $\rho|_{\partial N_a}$ of the fundamental group of $\partial N_a$ is also acyclic from the short exact sequence of $H_*(N_a;V_{\rho_a}), H_*(N_b;V_{\rho_b})$ and $H_*(M;V_{\rho})$.
In this situation, a gluing formula among
$d(M,\rho),d(N_a,\rho_a)$ and $d(N_b,\rho_b)$ holds:
\begin{them}
\begin{eqnarray}
d(M,\rho)=
(\iota_a)_*d(N_a,\rho_a)+(\iota_b)_*d(N_b,\rho_b).
\end{eqnarray}
\end{them}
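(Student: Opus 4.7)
The plan is to use the Morse theoretic description of $d(M,\rho)$ introduced earlier in the paper, choosing gradient data on $M$ adapted to the decomposition $M=N_a\cup N_b$. Concretely, I would pick Morse functions $f_a$ on $N_a$ and $f_b$ on $N_b$ whose negative gradient flows are transverse to (and cross consistently through) the common torus boundary $T=\partial N_a=\partial N_b$, and glue them on a collar of $T$ without introducing new critical points. Then $\Crit(f)=\Crit(f_a)\sqcup\Crit(f_b)$, and the stable/unstable manifolds in $M$ are obtained from those in each $N_\bullet$ by extending them across $T$ along the flow.

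With such an adapted propagator, the $1$-chain on $M$ representing $I_{\circ}(M,\rho_a*\rho_b)$ should decompose into three pieces: contributions supported in $N_a$, contributions supported in $N_b$, and a correction term supported in a neighborhood of $T$ coming from trajectories that traverse the boundary. By construction the first two pieces are exactly $(\iota_a)_*I_{\circ}(N_a,\rho_a)$ and $(\iota_b)_*I_{\circ}(N_b,\rho_b)$. Reducing modulo $H_1(M;\ZZ)$ and showing that the correction term contributes trivially to the quotient $H_1(M;V_{(\rho_a*\rho_b)^*}\otimes V_{\rho_a*\rho_b})/H_1(M;\ZZ)$ then yields the claimed identity.

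The main obstacle is the analysis of the correction term near $T$. My strategy is to localize it as a cycle supported in a thin bicollar of $T$ and to exploit the acyclicity hypothesis on $\rho_a|_{\partial N_a}$: this should force the portion of the chain concentrated near $T$, viewed with the coefficient system inherited from $\rho_a*\rho_b$, to bound a $2$-chain in the bicollar, and hence to vanish in homology. A secondary check is independence of the many auxiliary choices (the Morse functions, the collar gluing, the extensions of the propagator). This should reduce to the well-definedness of $d(N_a,\rho_a)$ and $d(N_b,\rho_b)$ modulo $H_1(\cdot;\ZZ)$ together with the fact that the inclusion-induced maps $(\iota_a)_*$ and $(\iota_b)_*$ send the integral ambiguities on each piece into $H_1(M;\ZZ)$, so that the right-hand side is unambiguously defined.
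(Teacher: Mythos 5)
Your overall skeleton (an adapted Morse function on $M$ with no critical points on a bicollar of $T$, so that $\Crit(f)=\Crit(f_a)\sqcup\Crit(f_b)$ and the chain representing $I_{\circ}(M,\rho_a*\rho_b)$ splits into contributions from the two sides plus cross terms) matches the paper's, but your treatment of the cross terms has a genuine gap. Each cross term is indexed by a trajectory $\gamma$ running from a critical point of $f_a$ to a critical point of $f_b$, and it is the $1$-chain $\gamma$ itself weighted by a composite involving the component $G_{s(\gamma),t(\gamma)}:V_{t(\gamma)}\to V_{s(\gamma)}$ of the combinatorial propagator that maps the $N_b$-summand of the Morse complex back to the $N_a$-summand. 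These chains go from deep inside $N_a$ to deep inside $N_b$; they are not supported in a thin bicollar of $T$, and their sum is not by itself a cycle, so your plan to localize the correction near $T$ and bound it there using acyclicity of $\rho|_{\partial N_a}$ does not get off the ground. The paper's key step is instead algebraic: since no trajectory goes from $N_b$ to $N_a$, the Morse boundary operator of $M$ is block upper-triangular, and Lemma~5.1 constructs by induction a propagator $G^M=G^a\oplus G^b\oplus G^{\UL}$ on $M$ extending the given $G^a,G^b$ whose component $C_*^b\to C_*^a$ is zero. With this choice every cross term is multiplied by zero, so $I_{\circ}(M)=I_{\circ}(N_a)+I_{\circ}(N_b)$ holds exactly at the chain level and there is no correction to dispose of. You would need this lemma, or some substitute controlling the $C^b\to C^a$ block of the propagator.

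A second missing step: with the adapted Morse function the negative gradient flow exits $N_a$ through $T$ and enters $N_b$, so the two pieces receive different boundary conditions, and the splitting naturally yields $(\iota_a)_*d(N_a,\partial N_a,\rho_a)+(\iota_b)_*d(N_b,\rho_b)$ (Theorem~4.2), not the symmetric formula you set out to prove. Passing to the stated identity requires the separate equality $d(N_a,\partial N_a,\rho_a)=d(N_a,\rho_a)$ (Lemma~6.1), which the paper obtains by inserting a product collar and computing explicitly that $d([-1,1]\times T^2,\{-1,1\}\times T^2,\rho)=0$; it is in that computation, through the invertibility of $1-a$, that the acyclicity of $\rho|_{\partial N_a}$ actually enters. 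Your proposal does not distinguish the two versions of the invariant on the pieces, so this step is absent.
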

Here $(\iota_a)_*$ and $(\iota_b)_*$ are induced by the inclusion $\iota_a:N_a\to M$ and $\iota_b:N_b\to M$ respectively.
This formula can be considered as an logarithmic analogy of the following well-known gluing formula for the Reidemeister torsion (for example, see \cite{Johnson}):
$${\rm Tor}(M,\rho)={\rm Tor}(N_a,\rho_a){\rm Tor}(N_b,\rho_b)$$ 
This suggests that $d(M,\rho)$ can be computed from the Reidemeister torsion via a variant of a logarithm as in the case for abelian representations.

The contents of this paper are as follows.
In Section~2 we review the definition of $d(M,\rho)$ of closed manifolds.
In Section~3 we give a Morse theoretic description of $d(M,\rho)$.
By using this description, $d(M,\rho)$ can be generalized for manifolds with torus boundaries.
A gluing formula for this generalized $d(M,\rho)$ is given in Section~4.
In Section 5, the well-definedness of the generalized $d(M,\rho)$ and the gluing formula stated in Section~4 are proved.
\subsection*{Acknowledgments}
This work was (partly) supported by JSPS KAKENHI Grant Number JP19K03505 and JP18K13408.
\section{Review of $d(M,\rho)$ of a closed manifold $M$}
In this section, we review a definition of $d(M,\rho)$.
See \cite{Shimizu} for more details.

Let $M$ be a closed oriented 3-manifold.
Let $\rho:\pi_1(M)\to {\rm GL}(V)$ be a representation on a vector space $V$ over a field $F$.
The representation $\rho$ gives a local system $V_{\rho}$ on $M$ as follows. A vector space $V_x\cong V$ is assigned to each $x\in M$.
For a path $\gamma$ from $x\in M$ to $y\in M$, an isomorphism which is denoted by $\gamma_*:V_x\stackrel{\cong}{\to} V_y$ is assigned.
A closed loop $\gamma$ gives an element $[\gamma]$ of $\pi_1(M)$.
Such a $\gamma$ satisfies $\gamma_*=\rho([\gamma])$.
We assume that $\rho$ is acyclic.
A representation $\rho$ is said to be acyclic if the homology groups with the corresponding local system coefficient are vanishing: $H_*(M;V_{\rho})=0$.
We denote by $V_{\rho^*}$ the local system given by the dual representation $\rho^*$ of $\rho$.
Let $\pi_i:M\times M\ni (x_1,x_2)\mapsto x_i\in M$ the projection for $i=1,2$.
By the K\"{u}nneth formula, 
a local system $V_{\rho^*}\boxtimes V_{\rho}=\pi_1^*V_{\rho^*}\otimes \pi_2^*V_{\rho}$ on $M\times M$ is acyclic.

Let $\Delta=\{(x,x)\mid x\in M\}\subset M\times M$ be the diagonal.
The restriction of $V_{\rho^*}\boxtimes V_{\rho}$ to 
$\Delta$ is equal to $V_{\rho^*}\otimes V_{\rho}={\rm Hom}(V,V)_{\rho^*\otimes \rho}={\rm Hom}(V,V)_{\rho}$. 
The identity homomorphism 
${\bf 1}\in {\rm Hom}(V,V)=V^*\otimes V$
gives the canonical section of $V_{\rho^*}\otimes V_{\rho}$.
Then, for a chain $c\in C_*(\Delta;\ZZ)$, we have a chain $c{\bf 1}$ of 
$C_*(\Delta;V_{\rho^*}\otimes V_{\rho})$.
Thus $\Delta$ gives a 3-cycle
$$\Delta{\bf 1}\in C_3(\Delta;V_{\rho^*}\otimes V_{\rho}
)\subset C_3(M\times M;V_{\rho^*}\boxtimes V_{\rho}).$$
Let $N(\Delta)\subset M\times M$ be a closed tubular neighborhood of $\Delta$. 
$N(\Delta)$ is a 3-dimensional closed disk $B^3$-bundle over $\Delta$. 
Since $\partial N(\Delta)\cong \Delta\times \partial B^3$, there is
a section $s:\Delta\to \partial N(\Delta)$ of $\partial B^3$-bundle $\partial N(\Delta)\to \Delta$. 
Then $s(\Delta){\bf 1}$ is a 3-cycle of $C_3(M\times M;V_{\rho^*}\boxtimes V_{\rho})$. 
Since $H_3(M\times M;V_{\rho^*}\boxtimes V_{\rho})=0$,
there exists a 4-chain $\Sigma\in C_4(M\times M;V_{\rho^*}\boxtimes V_{\rho})$ satisfying $\partial \Sigma=s(\Delta){\bf 1}$. 
Roughly speaking, $d(M,\rho)$ is defined to be the intersection of $\Sigma$ and $\Delta$.
For a precise definition, we take some homomorphisms $\partial_*, E$ and $\iota$.
The interior of $N(\Delta)$ is denoted by $\mathring{N}(\Delta)$.
First
$$\partial_*:H_4(M\times M,M\times M\setminus \mathring{N}(\Delta);V_{\rho^*}\boxtimes V_{\rho})\to H_3(M\times M\setminus \mathring{N}(\Delta);V_{\rho^*}\boxtimes V_{\rho})$$
is the connecting homomorphism of the homology long exact sequence for $(M\times M, M\times M\setminus \mathring{N}(\Delta))$. Since $V_{\rho^*}\boxtimes V_{\rho}$ is acyclic, $\partial_*$ is an isomorphism.
Secondly let 
$$E:H_4(M\times M,M\times M\setminus\mathring{N}(\Delta);V_{\rho}\boxtimes V_{\rho^*})
\stackrel{\cong}{\to} H_4(N(\Delta),\partial N(\Delta);V_{\rho^*}\boxtimes V_{\rho})$$
be the excision isomorphism.

Recall that $V_{\rho^*}\boxtimes V_{\rho}\cong V_{\rho^*}\otimes V_{\rho}$ on $\Delta\subset M\times M$.
Then $H_1(M;\ZZ)$ is considered as a subspace of $H_1(M;V_{\rho^*}\otimes V_{\rho})$ via $H_1(M;\ZZ)\cong H_1(M;\ZZ){\bf 1}\hookrightarrow H_1(M;V_{\rho^*}\otimes V_{\rho}).$
Then we can consider the Thom isomorphism 
$$\iota:H_4(N(\Delta),\partial N(\Delta);V_{\rho^*}\boxtimes V_{\rho})
\stackrel{\cong}{\to} H_1(\Delta;V_{\rho^*}\otimes V_{\rho}).$$
Finally we obtain an isomorphism
$$\Phi=\iota\circ E\circ \partial_*^{-1}:H_3(M\times M\setminus\mathring{N}(\Delta);V_{\rho^*}\boxtimes V_{\rho})\stackrel{\cong}{\to} 
H_1(\Delta;V_{\rho^*}\otimes V_{\rho})\cong H_1(M;V_{\rho^*}\otimes V_{\rho})$$ 
by taking the composition of the aboves and the canonical diffeomorphism $\Delta\ni (x,x) \mapsto x\in M$.
The 3-cycle $s(\Delta){\bf 1}$ gives a homology class in $H_3(M\times M\setminus \mathring{N}(\Delta);V_{\rho^*}\boxtimes V_{\rho})$.
Thus we have a homology class 
$\Phi [s(\Delta){\bf 1}]\in H_1(M;V_{\rho^*}\otimes V_{\rho})$.
It depends on a choice of a section $s:\Delta\to\partial N(\Delta)$.
Its equivalence class in $H_1(M;V_{\rho^*}\otimes V_{\rho})/H_1(M;\ZZ){\bf 1}$ is, however, independent of a choice of $s$:
\begin{lemma}
For any sections $s_1,s_2:\Delta\to \partial N(\Delta)$,
$$\Phi[s_1(\Delta){\bf 1}]-\Phi[s_2(\Delta){\bf 1}]\in H_1(M;\ZZ){\bf 1}.$$ 
\end{lemma}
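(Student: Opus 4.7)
The plan is to exploit the fact that the local coefficient system $V_{\rho^*}\boxtimes V_\rho$ is canonically trivialized on the tubular neighborhood $N(\Delta)$: its restriction there is the pullback of $V_{\rho^*}\otimes V_\rho|_\Delta$ along the retraction $N(\Delta)\to\Delta$, so the canonical section ${\bf 1}$ extends from $\Delta$ to all of $N(\Delta)$, and in particular to $\partial N(\Delta)$. Using this, I would rewrite
$$s_1(\Delta){\bf 1} - s_2(\Delta){\bf 1} = (s_1(\Delta)-s_2(\Delta)){\bf 1},$$
where $s_1(\Delta)-s_2(\Delta)$ is an honest integral $3$-cycle on $\partial N(\Delta)$. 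The task then reduces to showing that $\Phi$ carries any class of the form $[Z\cdot{\bf 1}]$, with $Z\in H_3(\partial N(\Delta);\ZZ)$ in the kernel of the pushforward to $\Delta$, into $H_1(M;\ZZ){\bf 1}$.

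For the identification of $[s_1(\Delta)-s_2(\Delta)]$, I would use that a closed orientable $3$-manifold is parallelizable, so the sphere bundle $\partial N(\Delta)\to\Delta$ (the unit sphere bundle of the normal bundle $\cong TM$) is trivial: $\partial N(\Delta)\cong\Delta\times S^2$. Under this trivialization both $s_1$ and $s_2$ are sections of the projection, hence both represent the fundamental class of $\Delta$ under pushforward. Their difference therefore lies in the kernel, which by the K\"unneth formula is $H_1(\Delta;\ZZ)\otimes H_2(S^2;\ZZ)\cong H_1(M;\ZZ)$. Pick an integral $1$-cycle $\alpha$ in $\Delta$ with $[s_1(\Delta)-s_2(\Delta)]=[\alpha\times S^2]$.

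The final step is to evaluate $\Phi[(\alpha\times S^2){\bf 1}]$. Since $N(\Delta)\cong\Delta\times B^3$, the cycle $\alpha\times S^2$ bounds the $4$-chain $\alpha\times B^3$ inside $N(\Delta)$, providing a natural lift $[(\alpha\times B^3){\bf 1}]$ under $\partial_*^{-1}$ that manifestly lies in the image of $E$. The Thom isomorphism $\iota$ amounts to intersection with the zero section $\Delta\cong\Delta\times\{0\}$, returning $[\alpha]{\bf 1}\in H_1(\Delta;V_{\rho^*}\otimes V_\rho)$. Under $\Delta\cong M$ this lands in $H_1(M;\ZZ){\bf 1}$, as desired.

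The main obstacle I anticipate is the bookkeeping of how the canonical section ${\bf 1}$ interacts with the Thom class: one must verify that, under the pullback trivialization of $V_{\rho^*}\boxtimes V_\rho$ over $N(\Delta)$, the Thom isomorphism sends a chain of the form $c\cdot{\bf 1}$ with $c\in C_*(N(\Delta);\ZZ)$ to $(c\cap\Delta){\bf 1}$. Once this naturality is checked, integrality of the output is automatic from integrality of $\alpha$, and the argument closes.
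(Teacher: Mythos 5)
Your proposal is correct and follows essentially the same route as the paper: fix a trivialization $N(\Delta)\cong\Delta\times B^3$, use the K\"unneth decomposition of $H_3(\partial N(\Delta);\ZZ)$ to identify $[s_1(\Delta)]-[s_2(\Delta)]$ with a class $\alpha\otimes[\partial B^3]$ for some $\alpha\in H_1(\Delta;\ZZ)$, and then push $\alpha{\bf 1}\otimes[B^3,\partial B^3]$ through $\iota\circ E\circ\partial_*^{-1}$ to land in $H_1(M;\ZZ){\bf 1}$. The only cosmetic difference is how the fiber component is killed: the paper pairs with $[{\rm pt}_\Delta]\otimes[\partial B^3]$ and gets $1-1=0$, whereas you push forward to the base and note both sections represent $[\Delta]$; these are equivalent.
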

\begin{proof}
Fix a trivialization $N(\Delta)\cong\Delta\times B^3$.
Let ${\rm pt}_{\partial B^3}$ be a point of $\partial B^3$
and ${\rm pt}_{\Delta}$ a point of $\Delta$.
Then 
\begin{eqnarray*}
H_3(\partial N(\Delta);\ZZ)
&=&\left(H_1(\Delta;\ZZ)\otimes H_2(\partial B^3;\ZZ)\right)\oplus\left(H_3(\Delta;\ZZ)\otimes H_0(\partial B^3;\ZZ)\right)\\
&=&\left(H_1(\Delta;\ZZ)\otimes [\partial B^3]\right)\oplus\left(H_3(\Delta;\ZZ)\otimes [{\rm pt}_{\partial B^3}]\right).
\end{eqnarray*}
Here $H_1(\Delta;\ZZ)\otimes [\partial B^3]$ 
denotes the image of a homorphism 
$H_1(\Delta;\ZZ)\ni \alpha\mapsto \alpha\otimes [\partial B^3]\in H_1(\Delta;\ZZ)\otimes H_3(\partial B^3;\ZZ)$.
Because 
\begin{eqnarray*}
&&([s_1(\Delta)]-[s_2(\Delta)])\cap ([{\rm pt}_{\Delta}]\otimes [\partial B^3])\\
&=&[s_1(\Delta)]\cap ([{\rm pt}_{\Delta}]\otimes [\partial B^3])-[s_2(\Delta)]\cap ([{\rm pt}_{\Delta}]\otimes [\partial B^3])\\
&=&1-1\\
&=&0,
\end{eqnarray*}
it holds 
$[s_1(\Delta)]-[s_2(\Delta)]\in H_1(\Delta;\ZZ)\otimes [\partial B^3]
\subset H_1(\Delta;\ZZ)\otimes H_2(\partial B^3;\ZZ)$.
Thus there exists a homology class $\alpha\in H_1(\Delta;\ZZ)$ so that $\alpha\otimes[\partial B^3]
=[s_1(\Delta)]-[s_2(\Delta)]$.
Therefore
\begin{eqnarray*}
\Phi([s_1(\Delta){\bf 1}]-[s_2(\Delta){\bf 1}])
&=&\iota\circ E\circ\partial_*^{-1}(\alpha{\bf 1}\otimes [\partial B^3])\\
&=&\iota\circ E(\alpha{\bf 1}\otimes [B^3,\partial B^3])\\
&=&\iota(\alpha{\bf 1}\otimes [B^3,\partial B^3])\\
&=&\alpha{\bf 1}\in H_1(M;\ZZ){\bf 1}\subset H_1(M;V_{\rho^*}\otimes V_{\rho}).
\end{eqnarray*}
\end{proof}
By the above lemma, we can define $d(M,\rho)$ as follows:
\begin{defini}
$$d(M,\rho)=\Phi [s(\Delta){\bf 1}]\in H_1(M;V_{\rho^*}\otimes V_{\rho})/H_1(M;\ZZ).$$
\end{defini}

\section{Morse theoretical description of $d(M,\rho)$}
We give a Morse theoretical description of the defect $d(M,\rho)$ by using the Morse homotopy theory in this context established by K.~Fukaya \cite{Fukaya} and T.~Watanabe~\cite{Watanabe}. 
As a general reference for the Morse theory, see \cite{PM111} for example. 

Take a Morse function $f:M\to \RR$.
Let $\grad f$ be a gradient like vector field for $f$ satisfying the Morse-Smale condition.
Let $\{\Phi_t^f:M\to M\}_{t\in \RR}$ denote the 1-parameter family of diffeomorphisms associated to $-\grad f$.
Let ${\rm Crit}_i(f)$ denote the set of critical points of $f$ of Morse index $i$. 
Set ${\rm Crit}(f)=\sqcup_i{\rm Crit}_i(f)$.
For a critical point $p\in {\rm Crit}(f)$, we denote by ${\rm ind}(p)$ its Morse index.
For $p\in {\rm Crit}(f)$ and $q\in {\rm Crit}_{\ind(p)-1}(f)$, we denote by $\mathcal M(p,q)$ the set of trajectories connecting $p$ and $q$.
We orient the ascending manifold $\mathcal A_p$ of $p$ and the descending manifold $\mathcal D_p$ of $p$ by imposing the condition 
$T_p\mathcal A_p\oplus T_p\mathcal D_p=T_pM$ for each $p\in {\rm Crit}(f)$.
Each trajectory is a part of the intersection of an ascending manifold and a descending manifold. 
Then by using orientations of ascending manifold, descending manifold and $M$, each trajectory becomes an oriented 1-manifold. 
We assign $\ep(\gamma)=\pm 1$ to each $\gamma\in \mathcal M(p,q)$ as follows: if the orientation of $\gamma$ is from $p$ to $q$, then $\ep(\gamma)=1$; if the orientation of $\gamma$ is opposite, then $\ep(\gamma)=-1$.

For a singular 1-simplex $\sigma:[0,1]\to M$ and 
$a\in V_{\sigma(0)}^*\otimes V_{\sigma(0)}$,
we have a 1-chain 
$\sigma a\in C_1(M;V_{\rho^*}\otimes V_{\rho}).$
We consider $\gamma\in \mathcal M(p,q)$ as a singular 1-simplex. 
Thus for any $a\in V_p^*\otimes V_p$, we have a 1-chain 
$$\gamma a\in C_1(M;V_{\rho^*}\otimes V_{\rho}).$$
We assign $(-1)^{{\rm ind}(p)}$ to each critical point $p\in {\rm Crit}(f)$. Then ${\rm Crit}(f)$ becomes a $0$-chain,
denoted by $\widehat{\Crit}(f)$, of $C_0(M;\ZZ)$.
Since $\chi(M)=0$, the $0$-chain $\widehat{\Crit}(f)$ is a boundary
element in $C_0(M;\ZZ)$.
Now take a $1$-chain 
$$c_f\in C_1(M;\ZZ)$$
satisfying $\partial c_f={\rm Crit}(f)=\sum_{p}(-1)^{\ind (p)}p$. 
Then $c_f{\bf 1}$ is a 1-chain of $C_1(M;V_{\rho^*}\otimes V_{\rho})$.

With the above notations, the ($\rho$-twisted) Morse-Smale complex 
$(C_*,\partial_*)$
of $\grad f$
is written as follows:
\begin{eqnarray*}
C_i&=&\bigoplus_{p\in{\rm Crit}_i(f)}V_p(\cong V^{\sharp {\rm Crit}_i(f)}),\\
(\partial_i:C_i\to C_{i-1})&=&\sum_{p\in\Crit_i(f),q\in\Crit_{i-1}(f)}(\partial_{q,p}:V_p\to V_q),\\
(\partial_{q,p}:V_p\to V_q)&=&\sum_{\gamma\in {\mathcal M}(p,q)}(-1)^
{{\rm ind}(p)}\ep(\gamma)\gamma_*.
\end{eqnarray*}
\begin{defini}
A family of homomorphisms
$G_*=\{G_i:C_{i-1}\to C_i\}_i$ is said to be a {\it combinatorial propagator} for $(C_*,\partial_*)$ if $G_*$ satisfies the following conditions:
$$\partial_{i+1}\circ G_{i+1}+G_i\circ \partial_i={\rm id}_{C_i}~\text{for any}~i$$
\end{defini}
For a trajectory $\gamma\in \mathcal M(p,q)$,
a homomorphism $\gamma_*$ is from $V_{p}$ to $V_{q}$,
thus $\ep(\gamma)\gamma_*$ is also a homomorphism from $V_p$ to $V_q$.
Let $G_{p,q} 
:V_q\to V_p$ denote the part of $G_{i}:C_{i-1}\to C_i$ given by
$G_{p,q}=\pi_{V_p}\circ G_i|_{V_q}$, 
where $\displaystyle\pi_{V_p}:C_i=\oplus_{p'\in \Crit_i(f)}V_{p'}\to V_p$ is the projection.
Thus $G_{p,q}\circ \ep(\gamma)\gamma_*$ is a homomorphism from $V_p$ to itself.
By taking a tensor product with the identity homomorphism $1:V_p^*\to V_p^*$, we have a homomorphism 
$1\otimes G_{p,q}\circ \ep(\gamma)\gamma_*$ from 
$ V_p^*\otimes V_p$ to itself.
Recall that $\gamma$ is considered as a 1-chain. 
Therefore $\gamma(1\otimes G_{p,q}\circ \ep(\gamma)\gamma_*){\bf 1}$ is a 1-chain of $C_1(M;V_{\rho^*}\otimes V_{\rho})$.
Furthermore
$(1\otimes G_{p,q}\circ \ep(\gamma)\gamma_*){\bf 1}
=G_{p,q}\circ\ep(\gamma)\gamma_*$
under the identification 
$V_{\rho^*}\otimes V_{\rho}={\rm Hom}(V,V)_{\rho}$.
Then $\gamma(1\otimes G_{p,q}\circ \ep(\gamma)\gamma_*){\bf 1}\in C_1(M;V_{\rho^*}\otimes V_{\rho})$ can be described as
$\gamma(G_{p,q}\circ\ep(\gamma)\gamma_*)\in C_1(M;{\rm Hom}(V,V)_{\rho})$.
\begin{defini}
\begin{eqnarray*}
&&I_{\circ}(M,\rho;\grad f,G,c_f)\\
&&\hskip10mm=-c_f{\bf 1}-\sum_{\substack{p,q\in\Crit(f);\\{\rm ind}(p)={\rm ind}(q)+1}}
\sum_{\gamma\in{\mathcal M}(p,q)}\gamma(1\otimes G_{p,q}\circ \ep(\gamma)\gamma_*){\bf 1}_p \in C_1(M;V_{\rho^*}\otimes V_{\rho})\\
&&\hskip10mm\left(=-c_f{\bf 1}-\sum_{\substack{p,q\in\Crit(f);\\{\rm ind}(p)={\rm ind}(q)+1}}
\sum_{\gamma\in{\mathcal M}(p,q)}\gamma(G_{p,q}\circ \ep(\gamma)\gamma_*) \in C_1(M;{\rm Hom}(V,V)_{\rho})\right).
\end{eqnarray*}
\end{defini}
\begin{prop}[{\cite[Proposition~7.8.]{Shimizu}}]
For a closed oriented manifold $M$,
the 1-chain 

$I_{\circ}(M,\rho;\grad f,G,c_f)$ 
is a cycle and it represents $d(M,\rho)$.
\end{prop}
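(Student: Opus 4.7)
The plan is to produce, from the Morse data $(f,\grad f,G,c_f)$, an explicit 4-chain $\Sigma\in C_4(M\times M;V_{\rho^*}\boxtimes V_\rho)$ together with a section $s_\epsilon:\Delta\to\partial N(\Delta)$ so that $\partial\Sigma=s_\epsilon(\Delta){\bf 1}$ in $C_3(M\times M;V_{\rho^*}\boxtimes V_\rho)$, and then to identify the Thom-pushforward $\iota\circ E[\Sigma]$ with $I_{\circ}(M,\rho;\grad f,G,c_f)$. Once that identification is made, $I_{\circ}$ is automatically a 1-cycle, being the geometric intersection $\Sigma\cap\Delta$ of a 4-chain whose boundary lies on $\partial N(\Delta)$ (and therefore is disjoint from $\Delta$); and by the definition of $\Phi$ its homology class equals $\Phi[s_\epsilon(\Delta){\bf 1}]$, which by Lemma~2.1 represents $d(M,\rho)$ modulo $H_1(M;\ZZ){\bf 1}$.

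First I would set up the geometric propagator using the gradient flow. Let $\Phi_t^f$ be the flow of $-\grad f$ and, for a small $\epsilon>0$, put
\[
\Sigma_{\mathrm{flow}}=\{(x,\Phi_t^f(x)) : x\in M,\ t\in[\epsilon,\infty)\}\cdot{\bf 1},
\]
which is 4-dimensional and whose $t=\epsilon$ boundary component is $\pm s_\epsilon(\Delta){\bf 1}$, where $s_\epsilon(x,x)=(x,\Phi_\epsilon^f(x))$. The difficulty is the $t\to\infty$ boundary, which accumulates along the 3-dimensional locus $\bigsqcup_{p\in\Crit(f)}S(p)\times\{p\}$. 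To cancel this, I would compactify $\Sigma_{\mathrm{flow}}$ following Fukaya--Watanabe and add two families of correction 4-chains. For each pair $(p,q)\in\Crit(f)\times\Crit(f)$ with $\ind(p)=\ind(q)+1$, add a 4-chain supported on the closure of $U(p)\times S(q)$ (of dimension $\ind(p)+(3-\ind(q))=4$) weighted by the endomorphism coefficient $G_{p,q}$; the combinatorial propagator identity $\partial G + G\partial=\mathrm{id}$ is exactly what is needed so that these chains, together with $\Sigma_{\mathrm{flow}}$, have matching boundary contributions along the broken-trajectory strata. For the residual Euler contribution concentrated on the diagonal at critical points (of total weight $\chi(M)=0$), use $\partial c_f=\widehat{\Crit}(f)$ to kill it by a 4-chain $\Sigma_{c_f}$ whose intersection with $\Delta$ is $-c_f\cdot{\bf 1}$.

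Once $\Sigma=\Sigma_{\mathrm{flow}}+\Sigma_{\mathrm{corr}}$ has been built with $\partial\Sigma=s_\epsilon(\Delta){\bf 1}$, I would realize the Thom isomorphism $\iota$ as geometric intersection with the diagonal and decompose $\Sigma\cap\Delta$. The chain $\Sigma_{\mathrm{flow}}$ meets $\Delta$ only at the fixed locus of the flow, i.e.\ at $\Crit(f)$, and together with the $c_f$-correction this contributes $-c_f\cdot{\bf 1}$; each correction chain $G_{p,q}\cdot\overline{U(p)\times S(q)}$ meets $\Delta$ transversely along the diagonalized trajectory locus $\{(x,x):x\in U(p)\cap S(q)\}=\bigsqcup_{\gamma\in\mathcal M(p,q)}\gamma$, and with the Morse--Smale sign $\ep(\gamma)$ and the endomorphism $G_{p,q}$ this yields the 1-chain $\gamma(G_{p,q}\ep(\gamma)\gamma_*)$. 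Summing over all contributions gives $\iota\circ E[\Sigma]=[I_{\circ}(M,\rho;\grad f,G,c_f)]$, which proves the proposition.

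The main obstacle is the rigorous construction of the correction chains $\Sigma_{\mathrm{corr}}$. The compactification of the graph of the flow as $t\to\infty$ produces strata indexed by broken trajectories, and aligning the boundaries of the $G_{p,q}$-weighted chains with these strata is what translates the algebraic identity $\partial G + G\partial=\mathrm{id}$ into the required geometric boundary-matching statement on $M\times M$. Meticulous bookkeeping of orientations and of the coefficients in $V_{\rho^*}\boxtimes V_\rho$ (which are not canonical off the diagonal, but are defined via parallel transport along the flow) is what produces the Morse--Smale signs $\ep(\gamma)$ in the final expression, and the lift of the Euler 0-cycle $\widehat{\Crit}(f)$ to the 1-chain $c_f$ accounts for the remaining indeterminacy, which is the source of the quotient by $H_1(M;\ZZ){\bf 1}$ in the definition of $d(M,\rho)$.
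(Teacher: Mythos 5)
The paper does not prove this proposition: it is imported verbatim from \cite[Proposition~4.7]{Shimizu}, so there is no in-paper argument to compare against. Your outline is the standard (and, as far as the cited construction goes, the correct) strategy: build a Morse-theoretic propagator $\Sigma$ from the compactified graph of the flow of $-\grad f$, cancel the $t\to\infty$ strata $\bigsqcup_q \mathcal{A}(q)\times\mathcal{D}(q)$ by $G_{p,q}$-weighted chains $\overline{\mathcal{D}(p)\times\mathcal{A}(q)}$ (your dimension count $\ind(p)+(3-\ind(q))=4$ is right, and the diagonal intersection $\mathcal{D}(p)\cap\mathcal{A}(q)=\bigsqcup_{\gamma\in\mathcal{M}(p,q)}\gamma$ is exactly what reproduces the sum in Definition~3.2), handle the degenerate locus over $\Crit(f)$ via a fiberwise chain over $c_f$, and read off $\Phi[s_\epsilon(\Delta)\mathbf{1}]$ as the intersection with $\Delta$. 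That said, what you have written is a plan rather than a proof: the two points you yourself flag as ``the main obstacle'' are precisely where all the content lies, namely (i) the verification that after adding the correction chains the only remaining boundary is $s_\epsilon(\Delta)\mathbf{1}$ --- this requires applying the identity $\partial_{i+1}G_{i+1}+G_i\partial_i=\mathrm{id}$ degree by degree against the codimension-one strata of broken trajectories, not just invoking it; and (ii) the orientation and local-coefficient bookkeeping that produces the specific signs $\ep(\gamma)=(-1)^{\ind(p)}$ (or $(-1)^{\ind(p)+1}$) and the twist $1\otimes G_{p,q}\circ\ep(\gamma)\gamma_*$ rather than some other composition. Also, your statement that $\Sigma_{\mathrm{flow}}$ ``meets $\Delta$ only at $\Crit(f)$'' describes a non-transverse intersection ($\{(p,p)\}\times[\epsilon,\infty)$ collapses to a point), so the claim that this piece ``contributes $-c_f\mathbf{1}$'' needs the perturbation/excision argument spelled out; as written it is an assertion, not a derivation. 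None of this points to a wrong idea --- it is the argument of the cited reference --- but to make the proposal a proof you would essentially have to reproduce that reference's Sections on the Morse propagator.
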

Even if $\partial M\not=\emptyset$, 
$I_{\circ}(M,\rho;\grad f,G,c_f)$ is still a cycle. Furthermore, its homology class modulo $H_1(M;\ZZ)$ is independent from choices of $f$, $\grad f$, $G$ and $c$.
Let $N$ be a closed oriented 3-manifold.
We assume that each boundary component is diffeomorphic to a torus $T^2=S^1\times S^1$.
Let $\rho:\pi_1(N)\to {\rm GL}(V)$ be an acyclic representation 
such that the restriction $\rho|_{\partial N}$ is also acyclic. 
We take two types of Morse functions.
Let $f_+,f_-:(N,\partial N)\to \RR$ be Morse functions satisfying
$f_+|_{[0,1]\times \partial N}(t,x)=t$ and $f_-|_{[0,1]\times \partial N}(t,x)=-t$ for any $t\in [0,1]$ and $x\in \partial N$. Here $[0,1]\times \partial N\subset N$ is a collar neighborhood of $\partial N$ with $\{0\}\times \partial N=\partial N$.  
We take a combinatorial propagator $G$ and a 1-chain $c\in C_1(N;\ZZ)$ in the same manner of $M$ with $\partial M=\emptyset$.
\begin{prop}\label{welldef of d}
Both
$$[I_{\circ}(N,\rho;\grad f_-,G,c_f)]\in H_1(N;V_{\rho^*}\otimes V_{\rho})/H_1(N;\ZZ)$$
and
$$[I_{\circ}(N,\rho;\grad f_+,G,c_f)]\in H_1(N;V_{\rho^*}\otimes V_{\rho})/H_1(N;\ZZ)$$
are independent of the choices of $f$, $\grad f$, $G$ and a 1-chain $c_f$.
\end{prop}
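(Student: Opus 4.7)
The strategy is to verify invariance under changes of each of the four pieces of data, i.e.\ the Morse function $f$, the gradient-like vector field $\grad f$, the combinatorial propagator $G$, and the $1$-chain $c_f$, separately, and then concatenate. The arguments are modelled directly on those of \cite{Shimizu} for closed manifolds, with one key observation: the collar condition $f_\pm|_{[0,1]\times \partial N}(t,x)=\pm t$ forces $\grad f_\pm$ to be transverse to $\partial N$, so no critical points lie on $\partial N$ and all (broken) trajectories remain in the interior of $N$. Consequently, every parametrized moduli space used in the closed case continues to compactify in the expected way. Since the arguments for $f_+$ and $f_-$ are symmetric under reversing the sign of the vector field, it suffices to treat one of them.

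For invariance under $c_f$: if $c_f$ and $c_f'$ both satisfy $\partial c = \widehat{\Crit}(f)$, then $c_f - c_f'$ is a $1$-cycle in $C_1(N;\ZZ)$, and the difference of the corresponding $I_\circ$-chains is exactly $-(c_f - c_f'){\bf 1}$, which lies in $H_1(N;\ZZ){\bf 1}$ by definition. For invariance under $G$: any two combinatorial propagators $G, G'$ for the same Morse-Smale complex differ by a map $H = G - G'$ satisfying $\partial H + H\partial = 0$. Plugging $H$ into the defining formula for $I_\circ$ and using this identity together with the Morse-Smale description of $\partial$, the resulting $1$-chain can be shown to equal the boundary of a $2$-chain assembled from length-two broken trajectories $\gamma_1 * \gamma_2$ with total index drop $2$; this is a direct chain-level computation parallel to \cite{Shimizu}.

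Invariance under $\grad f$ with $f$ fixed: for a generic $1$-parameter family $\{X_t\}$ of Morse-Smale gradient-like vector fields joining two choices, the only generic bifurcations are handle slides. The parametrized moduli $\bigsqcup_t \mathcal{M}(p,q; X_t)$ provides a $2$-chain in $N$ whose boundary, after contraction with $G$ and the relevant local-system parallel transports, realizes the difference of the two $I_\circ$-chains, up to a correction that can be absorbed into a redefinition of $G$ and $c_f$ (already shown to be harmless above). Invariance under $f$ within the class $f_+$ (resp.\ $f_-$): a generic path of such Morse functions additionally exhibits birth-death bifurcations, which by the collar condition occur only in the interior of $N$. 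At such a birth-death a cancelling pair $(p,q)$ with $\ind(p)=\ind(q)+1$ is created together with a short connecting trajectory $\gamma_0$; one updates $c_f$ by $c_f \mapsto c_f \pm \gamma_0$ and checks that the contribution of $\gamma_0$ to the double sum defining $I_\circ$ cancels this adjustment.

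The main obstacle is the bookkeeping at handle-slide and birth-death strata: one must match the boundary strata of the parametrized moduli spaces with the forced changes in the combinatorial propagator $G$, and verify that the resulting $1$-cycle discrepancy indeed lies in $H_1(N;\ZZ){\bf 1}$. This is the heart of Shimizu's closed-manifold argument, and it transfers to the torus-boundary case without essential modification precisely because the collar condition keeps all Morse-theoretic dynamics confined to the interior of $N$.
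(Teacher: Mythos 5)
Your proposal is a legitimate strategy but it is genuinely different from the paper's, and it is worth seeing how much shorter the paper's route is. You attack the four choices ($f$, $\grad f$, $G$, $c_f$) one at a time by a direct bifurcation analysis: parametrized moduli spaces, handle slides, birth--deaths, and a chain-level computation for a change of combinatorial propagator, all justified by the (correct and genuinely useful) observation that the collar condition $f_\pm|_{[0,1]\times\partial N}(t,x)=\pm t$ confines every critical point and every trajectory to the interior of $N$. The paper instead uses a doubling trick: form $D(N)=N\cup(-N)$, extend $f_1^+$ and $f_2^+$ by one fixed Morse function $f_1^-$ on $-N$, apply Proposition~\ref{mainprop} to write $d(D(N),\rho*\rho)$ as $[I_{\circ}(N,\rho;\grad f_i^+,G_i^+,c_i^+)+I_{\circ}(-N,\rho;\grad f_1^-,G_1^-,c_1^-)]$ for $i=1,2$, invoke the already-established well-definedness for the \emph{closed} manifold $D(N)$ to conclude that the difference of the two $N$-contributions vanishes in $H_1(D(N);V_{\rho^**\rho^*}\otimes V_{\rho*\rho})/H_1(D(N);\ZZ)$, and finish with the Mayer--Vietoris injectivity of $H_1(N;V_{\rho^*}\otimes V_{\rho})/H_1(N;\ZZ)$ into the corresponding group of the double. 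This reduction costs only the acyclicity of $\rho|_{\partial N}$ (which is already a standing hypothesis) and one injectivity check, and it handles all four choices simultaneously with no new moduli-space analysis. By contrast, the two hardest steps of your argument --- that the difference produced by $G\mapsto G'$ bounds a $2$-chain built from index-drop-two moduli, and the bookkeeping that matches handle-slide and birth--death strata against the forced discontinuous changes of $G$ and $c_f$ --- are asserted by appeal to \cite{Shimizu} rather than carried out, so as written your text is a program whose feasibility rests on those verifications transferring verbatim. If you want to keep your approach, you should at least spell out the $2$-chain bounding the $G$-difference (note that changing $G$ creates no new trajectories, so the requisite $2$-chain must come from compactified moduli $\overline{\mathcal M}(p,r)$ with $\ind(p)=\ind(r)+2$, contracted against a chain homotopy $K$ with $G-G'=\partial K+K\partial$, which exists by acyclicity); otherwise the paper's doubling argument is the more economical route.
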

The proof of this proposition is given in Section~\ref{proof:welldef of d}.

From the above proposition, $d(N,\rho)$ and $d(N,\partial,\rho)$ can be defined as a homology class of $I_{\circ}(N,\rho;\grad f_-,c_f)$
and $I_{\circ}(N,\rho;\grad f_+,c_f)$ modulo $H_1(N;\ZZ)$ respectively, for a manifold $N$ with boundaries.
\begin{defini}
For a compact manifold $N$ with torus boundaries,
$$d(N,\rho)=[I_{\circ}(N,\rho;\grad f_-,G,c_f)]\in H_1(N;V_{\rho^*}\otimes V_{\rho})/H_1(N;\ZZ),$$
$$d(N,\partial N, \rho)=[I_{\circ}(N,\rho;\grad f_+,G,c_f)]\in H_1(N;V_{\rho^*}\otimes V_{\rho})/H_1(N;\ZZ).$$
\end{defini}
\begin{remark}
The homology group of the Morse-Smale complex of $f_-$
is $H_*(N)$ and that of $f_+$ is $H_*(N,\partial N)$.
Thus we adopt the above definition.
We show that $d(N,\partial N,\rho)=d(N,\rho)$ in Lemma~\ref{lemma:torus}. 
The next example plays an important role in the proof of $d(N,\partial N,\rho)=d(N,\rho)$.
\end{remark}
\subsection{Example}\label{example}
Let $N=[-1,1]\times T^2=[-1,1]\times (\RR/\ZZ)\times (\RR/\ZZ)$.
Let $\rho:\pi_1(N)\to {\rm GL}(V)$ be an acyclic representation. 
Let $f_0:(N,\partial N)\to (\RR,0)$ be the Morse function given by
$$f_0(t,x_1,x_2)=\left(\cos\frac{\pi}{2}t\right)(r_1+\cos2\pi x_1)(r_2+\cos 2\pi x_2)$$
for $(t,x_1)\in [-1,1]\times \RR/\ZZ$.
Here $r_1,r_2$ are positive constants satisfying $r_1\gg r_2\gg 1$.
It is easily checked that there are four critical points of $f$ and all of these are in $\{0\}\times T^2$. 
We denote by ${\rm Crit}(f_0)=\{{\rm NP},p,q,{\rm SP}\}$, where
${\rm ind}({\rm NP})=3,\ind(p)=\ind(q)=2,\ind({\rm SP})=1$. 
For a suitable choice of a gradient like vector field $\grad f_0$, 
there are two trajectories between 
${\rm NP}$ and $p$, $q$ and ${\rm SP}$ respectively.
Set 
${\mathcal M}({\rm NP},p)
=\{\gamma_{{\rm NP}\to p}^+,\gamma_{{\rm NP}\to p}^-\},$ 
$\mathcal M(q,{\rm SP})=\{\gamma_{q\to {\rm SP}}^+,\gamma_{q\to {\rm SP}}^-\}$
such that the 1-cycle
$(\gamma_{{\rm NP}\to p}^+)^{-1}+\gamma_{{\rm NP}\to p}^-$
is homologous to the 1-cycle
$(\gamma_{q\to{\rm SP}}^+)^{-1}+\gamma_{q\to{\rm SP}}^-$
in $H_1(\{0\}\times T^2;\ZZ)$. We denote by
$$l=[(\gamma_{{\rm NP}\to p}^+)^{-1}+ \gamma_{{\rm NP}\to p}^-]=
[(\gamma_{q\to {\rm SP}}^+)^{-1}+ \gamma_{q\to{\rm SP}}^-]
\in H_1(\{0\}\times T^2;\ZZ).$$

\includegraphics[width=12cm]{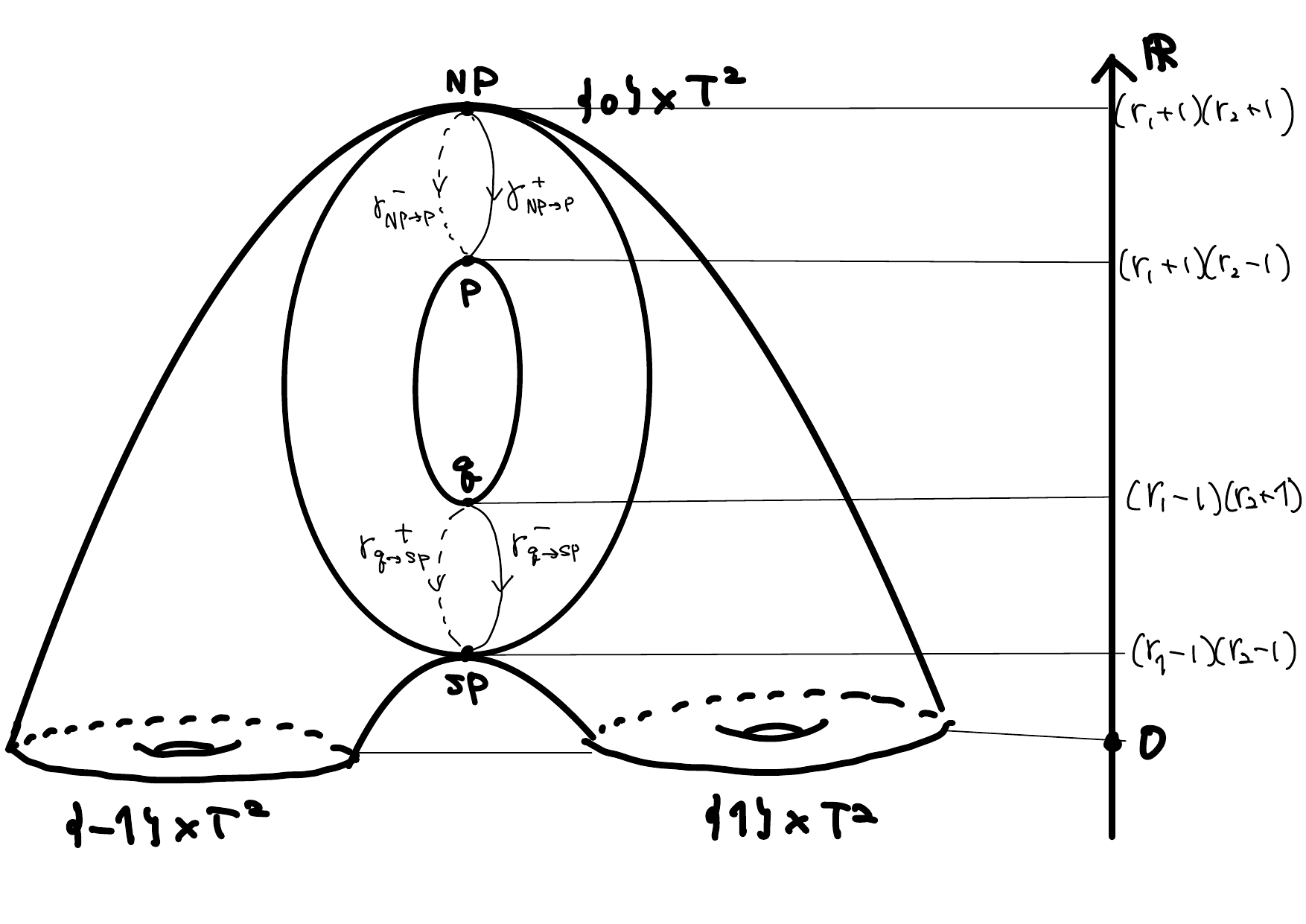}\label{pict1}

By fixing identifications $V_{\rm NP}\cong V$ and $V_q\cong V$,
we identify $V\cong V_{\rm NP}\cong V_p$ and $V \cong V_q\cong V_{\rm SP}$ via $(\gamma_{{\rm NP}\to p}^+)_*$ and $(\gamma_{q\to {\rm SP}}^+)_*$ respectively.
Under these identifications, $(\gamma_{{\rm NP}\to p}^-)_*$ can be given as an element $a=(\gamma_{{\rm NP}\to p}^-)_*\in {\rm GL}(V)$.
Since $[(\gamma_{{\rm NP}\to p}^+)^{-1}+ \gamma_{{\rm NP}\to p}^-]=
[(\gamma_{q\to {\rm SP}}^+)^{-1}+\gamma_{q\to{\rm SP}}^-]$, 
we have $(\gamma_{q\to {\rm SP}}^-)_*=a$.
We choose orientations of the ascending manifold $\mathcal A_p$ of $p$ and the descending manifold $\mathcal D_q$ so that 
$\ep(\gamma_{{\rm NP}\to p}^+)=1$ and $\ep(\gamma_{q\to {\rm SP}}^+)=1$.
Then $\ep(\gamma_{{\rm NP}\to p}^-)=-1$, $\ep(\gamma_{q\to {\rm SP}}^-)=-1$.
Therefore
$$\partial_{p,{\rm NP}}=(-1)^{{\rm ind}({\rm NP})}\ep(\gamma_{{\rm NP}\to p}^+)(\gamma_{{\rm NP}\to p}^+)_*+(-1)^{{\rm ind}({\rm NP})}\ep(\gamma_{{\rm NP}\to p}^-)(\gamma_{{\rm NP}\to p}^-)_*=-1+a,$$ $$\partial_{{\rm SP},q}=(-1)^{{\rm ind}(q)}\ep(\gamma_{q\to {\rm SP}}^+)(\gamma_{q\to {\rm SP}}^+)_*+(-1)^{{\rm ind}(q)}\ep(\gamma_{q\to {\rm SP}}^-)(\gamma_{q\to {\rm SP}}^-)_*=1-a.$$

Set $c_{f_0}=\gamma_{{\rm NP}\to p}^+-\gamma_{q\to {\rm SP}}^+\in C_1(N;\ZZ)$. Then $\partial c_{f_0}={\rm Crit}(f_0)$.
We take a combinatorial propagator $G$ as
$$G_3=G_{{\rm NP},p}\oplus G_{{\rm NP},q},\hskip2mm 
G_{{\rm NP},p}=(-1+a)^{-1}, G_{{\rm NP},q}=0,$$
$$G_2=G_{p,{\rm SP}}\oplus G_{q,{\rm SP}},\hskip2mm 
G_{p,{\rm SP}}=0, G_{q,{\rm SP}}=(1-a)^{-1}.$$
$\xymatrix{
 & V_{{\rm NP}} \ar[ddl]_(0.5){\gamma_{{\rm NP}\to p}^+}
\ar@<0.5ex>[ddl]^(0.5){\gamma_{{\rm NP}\to p}^-}
\ar@{.>}[ddr] 
& \\
&&\\
 V_p \ar@{.>}[ddr] 
& & 
V_q \ar[ddl]_(0.5){\gamma_{q\to {\rm NP}}^+}
\ar@<0.5ex>[ddl]^(0.5){\gamma_{q\to {\rm NP}}^-}
\\
&&\\
& V_{\rm SP} &
}$
\hskip10mm$\xymatrix{
 & V_{{\rm NP}} \ar[ddl]_{\substack{\partial_{p,{\rm NP}}\\=-1+a}}
\ar[ddr]^{\partial_{q,{\rm NP}}}
& \\
&&\\
 V_p \ar[ddr]_{\partial_{{\rm SP},p}} & & 
V_q \ar[ddl]^{\substack{\partial_{{\rm SP},q}\\ =1-a }}\\
&&\\
& V_{\rm SP} &
}$
\hskip10mm
$\xymatrix{
 & V_{{\rm NP}} & \\
&&\\
 V_p \ar[uur]^{\substack{G_{{\rm NP},p}\\=(-1+a)^{-1}}}  & & 
V_q \ar[uul]_{\substack{G_{{\rm NP},q}\\=0}}
\\
&&\\
& V_{\rm SP} \ar[uul]^{\substack{G_{p,{\rm SP}}\\=0}} 
\ar[uur]_{\substack{G_{q,{\rm SP}}\\=(1-a)^{-1}}}&
}$
\vskip2mm
{\small \hskip13mm trajectories
\hskip25mm boundary homomorphisms \hskip15mm combinatorial propagator}
\vskip5mm

\begin{table}[hbtp]
  \centering
  \begin{tabular}{lcr}
    \hline
 $\gamma$  & $\ep(\gamma)$  & $\gamma_*$ \\
    \hline \hline
   $\gamma_{{\rm NP}\to p}^+$   & 1& 1 \\
    $\gamma_{{\rm NP}\to p}^-$   & -1& $a$ \\
    $\gamma_{q\to {\rm SP}}^+$   & 1& 1 \\
    $\gamma_{q\to {\rm SP}}^-$   & -1& $a$ \\
     \hline 
  \end{tabular}
\vskip2mm
  \caption{$\ep(\gamma)$ and $\gamma_*$ of trajectories}
  \label{table:data_type}
\end{table}
\vskip5mm
Therefore $I_{\circ}(N,\rho;\grad f_0,G,c_{f_0})\in C_1(N;{\rm Hom}(V,V)_{\rho})$ is given as
\begin{eqnarray*}
&&I_{\circ}(N,\rho;\grad f_0,G,c_f)\\
&=&-c_{f_0}{\bf 1}-\sum_{\substack{r,s\in\Crit(f);\\{\rm ind}(r)={\rm ind}(s)+1}}
\sum_{\gamma\in{\mathcal M}(r,s)}\gamma(G_{r,s}\circ \ep(\gamma)\gamma_*)\\
&=&-(\gamma_{{\rm NP}\to p}^+-\gamma_{q\to {\rm SP}}^+){\bf 1}
-\gamma_{{\rm NP}\to p}^+(-1+a)^{-1}
-\gamma_{{\rm NP}\to p}^-(-1+a)^{-1}(-a)\\
&&
\hskip30mm-\gamma_{q\to {\rm SP}}^+(1-a)^{-1}
-\gamma_{q\to {\rm SP}}^-(1-a)^{-1}(-a)\\
&=&
-\gamma_{{\rm NP}\to p}^+(1+(-1+a)^{-1})
+\gamma_{{\rm NP}\to p}^-(-1+a)^{-1}a
-\gamma_{q\to {\rm SP}}^+(-1+(1-a)^{-1})
+\gamma_{q\to {\rm SP}}^-(1-a)^{-1}a
\end{eqnarray*}

Since $1+(-1+a)^{-1}=(-1+a)^{-1}\{(-1+a)+1\}=(-1+a)^{-1}a$ and 
$-1+(1-a)^{-1}=(1-a)^{-1}(-(1-a)+1)=(1-a)^{-1}a=-(-1+a)^{-1}a$, we have
\begin{eqnarray*}
&&I_{\circ}(N,\rho;\grad f_0,G,c_f)\\
&=&
-\gamma_{{\rm NP}\to p}^+(1+(-1+a)^{-1})
+\gamma_{{\rm NP}\to p}^-(-1+a)^{-1}a
-\gamma_{q\to {\rm SP}}^+(-1+(1-a)^{-1})
+\gamma_{q\to {\rm SP}}^-(1-a)^{-1}a\\
&=& (-\gamma_{{\rm NP}\to p}^++\gamma_{{\rm NP}\to p}^-)
(-1+a)^{-1}a-
(-\gamma_{q\to {\rm SP}}^++\gamma_{q\to {\rm SP}}^-)
(-1+a)^{-1}a.\\
\end{eqnarray*}

Thus
\begin{eqnarray*}
&&d(N,\partial N,\rho)\\
&=&[I_{\circ}(N,\rho;\grad f_0,G,c_{f_0})]\\
&=&[(-\gamma_{{\rm NP}\to p}^++\gamma_{{\rm NP}\to p}^-)
(-1+a)^{-1}a-
(-\gamma_{q\to {\rm SP}}^++\gamma_{q\to {\rm SP}}^-)
(-1+a)^{-1}a]\\
&=&
[(\gamma_{{\rm NP}\to p}^+)^{-1}+\gamma_{{\rm NP}\to p}^-]
(-1+a)^{-1}a-
[(\gamma_{q\to {\rm SP}}^+)^{-1}+\gamma_{q\to {\rm SP}}^-]
(-1+a)^{-1}a\\
&=&l(-1+a)^{-1}a-l(-1+a)^{-1}a\\
&=&0.
\end{eqnarray*}
Then we have $d([-1,1]\times T^2,\{-1,1\}\times T^2,\rho)=0$.

\section{Gluing formula for $d(M,\rho)$}
Let $N_a,N_b$ be compact oriented 3-manifolds with $\partial N_a\cong -\partial N_b$, where $-\partial N_b$ is $\partial N_b$ with the opposite orientation. 
We assume that each component of $\partial N_a(=\partial N_b)$ is diffeomorphic to a torus $T^2$.
Let $M$ be a closed oriented 3-manifold obtained by gluing $N_a$ with $N_b$ along the boundary:
$$M=N_a\cup N_b.$$
Let $\rho:\pi_1(M)\to {\rm GL}(V)$ be a representation such that all of  $\rho$, $\rho_a=\rho|_{N_a}, \rho_b=\rho|_{N_b}$ and $\rho|_{\partial N_a}(=\rho|_{\partial N_b})$ are acyclic.
\begin{example}
\begin{itemize}
\item[(1)] Let $N_a=D^2\times S^1, N_b=-D^2\times S^1$.
The boundary $\partial N_a=(\partial D^2)\times S^1$ is identified with $\partial N_b=(\partial D^2)\times S^1$ via the identity map.
Set $M=N_a\cup N_b\cong S^2\times S^1$.
Then $\pi_1(M)=\pi_1(N_a)=\pi_1(N_b)=\langle t\rangle\cong \ZZ$. 
In this case, for any non trivial representation $\rho:\langle t\rangle \ni t\mapsto \rho(t)\in U(1)\subset {\rm GL}(\CC)$, the representations $\rho$, $\rho_a,\rho_b$ and $\rho|_{\partial N_a}$ are acyclic.  
\item[(2)] Let $M$ be a 3-manifold obtained from $S^3$ by a Dehn  surgery along a knot $K\subset S^3$. In this case $N_a$ is a knot exterior $E(K)$ of $K$ and $N_b=D^2\times S^1$.
There are many representations $\rho: \pi_1(M)\to SL(2;\mathbb C)$ such that both $\rho$ and the restriction of $\rho$ to $\partial E(K)$ are acyclic (for example, see \cite{Kitano8} for the figure eight knot). For such representations, $\rho_a=\rho|_{N_a}$ and $\rho_b=\rho|_{N_b}$ are also acyclic.    
\item[(3)] 
Generally, the splice of knots, namely manifolds obtained by gluing two knot exteriors, give many examples of $M=N_a\cup N_b$. 
Y. Nozaki and the first author in  \cite{Kitano-Nozaki} studied on the Reidemeister torsion of the splice of knots.
Thus it is important to investigate the behavior of the defect $d$ under splice sum. 
How to treat a Morse-Smale complex for a given knot exterior, it is a next problem.
\end{itemize}
\end{example}
Take a Morse function $f:M\to \RR$ satisfying 
$$f|_{(-1,1)\times \partial N_a}(t,x)=t$$ for any $(t,x)\in (-1,1)\times\partial N_a$. 
Here $(-1,1)\times \partial N_a$ is identified with a tubular neighborhood of $\partial N_a\subset M$ such that $[0,1)\times \partial N_a\subset N_a$ and 
$(-1,0]\times \partial N_a\subset N_b$.

\includegraphics[width=8cm]{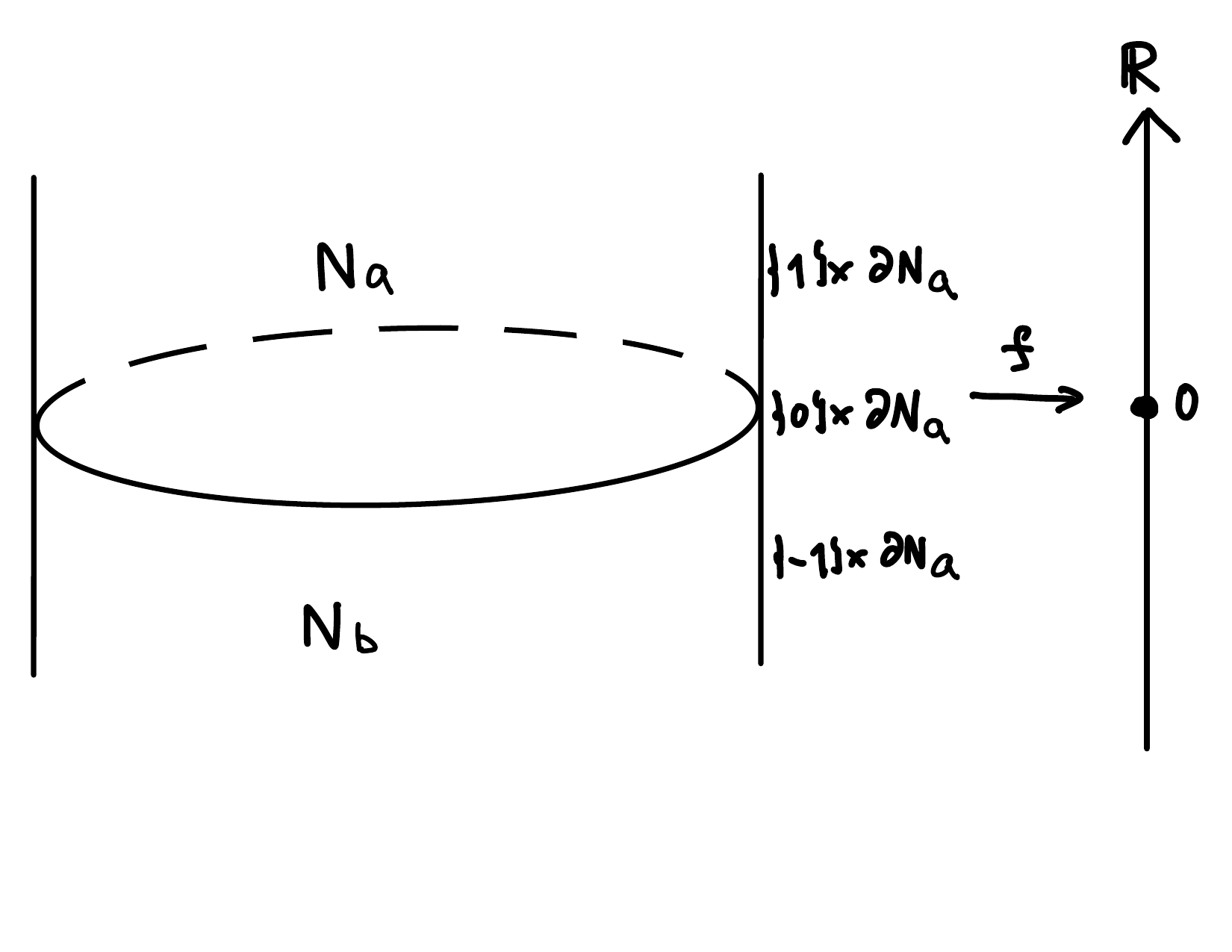}\label{pict1}

Take a gradient like vector field $\grad f$ satisfying the Morse-Smale conditions.
We assume that $\grad f$ is ``standard'' on the boundary $\partial N_a(=\partial N_b)$:
$$\grad f|_{\{0\}\times \partial N}=\frac{\partial }{\partial t}.$$
Write simply $f_a=f|_{N_a}, f_b=f|_{N_b}$, $\grad f_a=\grad f|_{N_a}$ and 
$\grad f_b=\grad f|_{N_b}$.
Let $(C_*^u,\partial_*^u)$ denote the Morse-Smale complex of $\grad f_u$ for $u=a,b$.
Since $\rho_a, \rho|_{\partial N_a}$ are acyclic, $H_*(N_a,\partial N_a;\rho_a)=H_*(C_*^a)=0$. Then $C_*^a$ is acyclic.
Take combinatorial propagators $G^{a},G^{b}$ for $(C_*^a,\partial_*^a),(C_*^b,\partial_*^b)$ respectively.
Take a 1-chain $c_u\in C_1(N_u;\ZZ)$
satisfying $\partial c_u=\widehat{\Crit}(f_u)$ for $u=a,b$.
We assume that the support of $c_u$ does not intersect $\partial N_u$ for $u=a,b$. 
Then the 1-chain $c_a+c_b\in C_1(M;\ZZ)$ satisfies $\partial (c_a+c_b)={\rm Crit}(f)$.
\begin{prop}\label{mainprop}
\begin{itemize}
\item[(1)] $I_{\circ}(N_a,\rho_a;\grad f_a,G^a,c_a)$ is a cycle of $C_1(N_a;V_{\rho_a^*}\otimes V_{\rho_a})$.
\item[(2)] $I_{\circ}(N_b,\rho_b;\grad f_b,G^b,c_b)$) is a cycle of
$C_1(N_b;V_{\rho_b^*}\otimes V_{\rho_b})$.
\item[(3)] $d(N_a\cup N_b,\rho)=[I_{\circ}(N_a,\rho_a;\grad f_a,G^a,c_a)+I_{\circ}(N_b,\rho_b;\grad f_b,G^b,c_b)]$.
\end{itemize}
\end{prop}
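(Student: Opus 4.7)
The plan for parts (1) and (2) is to invoke the assertion made in the excerpt just before Proposition~\ref{welldef of d}, namely that $I_{\circ}(N,\rho;\grad f,G,c_f)$ is a cycle even when $\partial N\neq\emptyset$, applied separately to $(N_a,f_a,G^a,c_a)$ and $(N_b,f_b,G^b,c_b)$. For part~(3), I will construct a specific combinatorial propagator $G$ on the Morse--Smale complex $(C_*,\partial_*)$ of $f$ on $M$, adapted to the decomposition along $\partial N_a$, and then verify the chain-level identity
\begin{align*}
&I_{\circ}(M,\rho;\grad f,G,c_a+c_b)\\
&\qquad=(\iota_a)_*I_{\circ}(N_a,\rho_a;\grad f_a,G^a,c_a)+(\iota_b)_*I_{\circ}(N_b,\rho_b;\grad f_b,G^b,c_b)
\end{align*}
in $C_1(M;V_{\rho^*}\otimes V_{\rho})$.

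The main structural input is the behaviour of $\grad f$ on the collar. Since $\grad f$ is nonvanishing there and equals $\partial/\partial t$ on $\{0\}\times \partial N_a$, we have $\Crit(f)=\Crit(f_a)\sqcup \Crit(f_b)$ and $C_i=C_i^a\oplus C_i^b$. The descending flow of $-\grad f$ can cross $\partial N_a$ only from $N_a$ into $N_b$, so no trajectory joins $\Crit(f_b)$ to $\Crit(f_a)$, and trajectories between two critical points of $f_a$ (resp.\ $f_b$) remain in $N_a$ (resp.\ $N_b$). Hence $\partial$ takes the block-triangular form
\[
\partial=\begin{pmatrix}\partial^a & 0 \\ \partial^{ab} & \partial^b\end{pmatrix},
\]
where $\partial^{ab}:C_*^a\to C^b_{*-1}$ records the boundary-crossing trajectories.

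I then seek $G$ of block form $G=\begin{pmatrix}G^a & 0 \\ K & G^b\end{pmatrix}$. Expanding $\partial G+G\partial={\rm id}$ in blocks, the diagonal relations reduce to the propagator identities for $G^a$ and $G^b$, the upper off-diagonal vanishes identically, and the lower off-diagonal becomes
\[
\partial^b K+K\partial^a=-L,\qquad L:=\partial^{ab}G^a+G^b\partial^{ab}.
\]
A short calculation using $\partial^2=0$ (whence $\partial^b\partial^{ab}+\partial^{ab}\partial^a=0$) together with the propagator relations for $G^a$ and $G^b$ shows that $L:(C^a_*,\partial^a)\to(C^b_*,\partial^b)$ is a degree-zero chain map. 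Because both complexes are acyclic and finite-dimensional, $G^a$ itself is a contracting homotopy for the source, and $K:=-L\circ G^a$ provides an explicit solution.

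With this $G$, expand $I_{\circ}(M,\rho;\grad f,G,c_a+c_b)$ and split pairs $(p,q)$ with $\ind(p)=\ind(q)+1$ according to which piece contains each critical point. Pairs with $p\in \Crit(f_a),q\in \Crit(f_b)$ use $G_{p,q}$ from the vanishing upper off-diagonal block of $G$ and contribute zero, so boundary-crossing trajectories drop out automatically. Pairs with both endpoints in $N_a$ (resp.\ $N_b$) use $G^a$ (resp.\ $G^b$) and have their trajectories in $N_a$ (resp.\ $N_b$); together with the $-c_a{\bf 1}$ (resp.\ $-c_b{\bf 1}$) portion of the counterterm they reproduce $(\iota_a)_*I_{\circ}(N_a,\rho_a;\grad f_a,G^a,c_a)$ (resp.\ $(\iota_b)_*I_{\circ}(N_b,\rho_b;\grad f_b,G^b,c_b)$). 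The main obstacle is the algebra of the previous paragraph---verifying that $L$ is a chain map so the acyclicity of the two local complexes yields the chain homotopy $K$; once the block-triangular $G$ is in hand, the block form of $\partial$ and the zero upper block of $G$ force the separation and give (3) at the chain level, hence modulo $H_1(M;\ZZ)$.
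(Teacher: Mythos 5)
Your proof of part (3) follows the same route as the paper: split the Morse--Smale complex of $f$ as $C_*=C_*^a\oplus C_*^b$ with block-triangular differential (no trajectories from $N_b$ to $N_a$), and extend the given $G^a\oplus G^b$ by a lower off-diagonal block $K=G_*^{\UL}$ so that the result is a combinatorial propagator on $M$ whose $C_*^b\to C_*^a$ block is zero; that vanishing block is exactly what kills the boundary-crossing trajectories and forces the chain-level splitting of $I_{\circ}$. Where you differ is in producing $K$, and your version is cleaner. The paper's Lemma~5.1 constructs $G^{\UL}_{k+1}$ by induction via the recursive formula $G^{\UL}_{k+1}=-(G^{\LL}_{k+1}\circ\partial^{\UL}_{k+1}\circ G^{\UU}_{k+1}+G^{\LL}_{k+1}\circ G^{\UL}_{k}\circ\partial^{\UU}_{k}+G^{\LL}_{k+1}\circ G^{\LL}_{k}\circ\partial^{\UL}_{k})$ and checks the required identity through two computational lemmas. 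Your observation that $L=\partial^{\UL}\circ G^{\UU}+G^{\LL}\circ\partial^{\UL}$ is a degree-zero chain map (immediate from $\partial^2=0$ and the two propagator identities) and that $K=-L\circ G^{\UU}$ then solves $\partial^{\LL}\circ K+K\circ\partial^{\UU}=-L$ in one line is correct --- the degrees and the cancellation check out --- and it replaces the induction by a closed formula. (One small remark: acyclicity is not an extra input at this point; the propagator identity already says $G^{\UU}$ is a contracting homotopy.)

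The one point to repair is your treatment of parts (1) and (2). You propose to quote the sentence preceding Proposition~3.5, which asserts that $I_{\circ}$ is a cycle even when the boundary is nonempty; but in the paper that sentence is never proved independently --- its justification is precisely Proposition~4.1 together with the doubling argument of Section~6 --- so invoking it here is circular. The paper instead deduces (1) and (2) from the chain identity you establish in (3): since $\grad f$ equals $\partial/\partial t$ on the collar, no trajectory with both endpoints in $N_a$ (resp.\ $N_b$) meets the collar, and $c_a$, $c_b$ are chosen away from the boundary, so the supports of $I_{\circ}(N_a,\rho_a;\grad f_a,G^a,c_a)$ and $I_{\circ}(N_b,\rho_b;\grad f_b,G^b,c_b)$ lie in $N_a\setminus\partial N_a$ and $N_b\setminus\partial N_b$ and are disjoint; as their sum is the cycle $I_{\circ}(M,\rho;\grad f,G,c_a+c_b)$, each summand is itself a cycle. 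Your argument already contains everything needed for this; just derive (1) and (2) from the identity in (3) rather than from the unproved assertion in Section~3.
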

A proof of Proposition~\ref{mainprop} is given in Section~\ref{sec6}.

Proposition~\ref{mainprop} implies the following gluing formula.
\begin{them}\label{maintheorem}
$$d(N_a\cup N_b,\rho)=(\iota_a)_*d(N_a,\partial N_a,\rho_a)+(\iota_b)_*d(N_b,\rho_b).$$
Here $(\iota_u)_*:H_1(N_u;V_{\rho_u^*}\otimes V_{\rho_u})/H_1(N_u;\ZZ)\to
H_1(N_a\cup N_b; V_{\rho^*}\otimes V_{\rho})
/H_1(N_a\cup N_b;\ZZ)$ is the homomorphism induced by the inclusion 
$\iota_u:N_u\hookrightarrow N_a\cup N_b$ for $u=a,b$.
\end{them}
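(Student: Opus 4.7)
The plan is to deduce Theorem~\ref{maintheorem} directly from Proposition~\ref{mainprop} once the restrictions of the Morse function $f$ to $N_a$ and to $N_b$ are correctly identified with the two boundary-normalised types $f_+$ and $f_-$ introduced in Section~3. Given Proposition~\ref{mainprop}, the entire content of the theorem reduces to a bookkeeping argument about Morse functions and boundary conventions, together with naturality of the pushforward under inclusion.

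First, I would analyse the boundary behaviour of $f$ on each side. The assumption $f(t,x)=t$ on the bicollar $(-1,1)\times \partial N_a$ combined with $[0,1)\times \partial N_a\subset N_a$ gives $f_a|_{[0,1)\times \partial N_a}(t,x)=t$, which matches the defining condition of an $f_+$-type Morse function on $(N_a,\partial N_a)$. On the $N_b$-side the same identification reads $f_b(t,x)=t$ on $(-1,0]\times\partial N_a$; reparametrising this as the standard collar $[0,1)\times \partial N_b$ via $s=-t$ (so that $\{0\}\times\partial N_b=\partial N_b$) turns the formula into $f_b(s,x)=-s$, which is exactly the defining condition of an $f_-$-type Morse function on $N_b$. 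The standardness assumption $\grad f|_{\{0\}\times \partial N_a}=\partial/\partial t$ ensures the gradient-like vector fields $\grad f_a,\grad f_b$ have the required boundary behaviour on their respective sides.

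Second, by the definitions immediately following Proposition~\ref{welldef of d}, we have
\begin{eqnarray*}
[I_\circ(N_a,\rho_a;\grad f_a,G^a,c_a)] &=& d(N_a,\partial N_a,\rho_a),\\
[I_\circ(N_b,\rho_b;\grad f_b,G^b,c_b)] &=& d(N_b,\rho_b),
\end{eqnarray*}
in $H_1(N_u;V_{\rho_u^*}\otimes V_{\rho_u})/H_1(N_u;\ZZ)$ for $u=a,b$, independence of all choices being guaranteed by Proposition~\ref{welldef of d}. Applying Proposition~\ref{mainprop}(3) and then pushing the resulting identity forward by the inclusions $\iota_a,\iota_b$ (which map $H_1(N_u;\ZZ)$ into $H_1(M;\ZZ)$, so that $(\iota_u)_*$ descends to the quotients) yields the desired formula
$$d(N_a\cup N_b,\rho_a*\rho_b)=(\iota_a)_*d(N_a,\partial N_a,\rho_a)+(\iota_b)_*d(N_b,\rho_b).$$

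The main obstacle is therefore not in Theorem~\ref{maintheorem} itself but in Proposition~\ref{mainprop}, whose proof is deferred to Section~\ref{sec6}: once one has shown that the Morse-theoretic chains on each side are cycles and that their sum represents $d(M,\rho_a*\rho_b)$, the theorem follows from the identifications above. The one conceptual subtlety at this level is the asymmetry between $d(N_a,\partial N_a,\rho_a)$ and $d(N_b,\rho_b)$, which reflects the asymmetry between the $f_+$- and $f_-$-type boundary conditions: a single Morse function $f$ on $M$ cannot be of $f_+$-type simultaneously on both sides of $\partial N_a=\partial N_b$, so exactly one of the two pieces must be measured by its relative defect.
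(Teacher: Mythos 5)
Your argument is correct and coincides with the paper's own (very terse) justification: the paper simply asserts that Proposition~\ref{mainprop} implies the theorem, and the substance you supply --- identifying $f_a$ as $f_+$-type and $f_b$ (after the reparametrisation $s=-t$ of the collar) as $f_-$-type, invoking Proposition~\ref{welldef of d} to equate the resulting classes with $d(N_a,\partial N_a,\rho_a)$ and $d(N_b,\rho_b)$, and pushing forward by the inclusions --- is exactly the intended bookkeeping. Your closing remark on the unavoidable $f_+$/$f_-$ asymmetry correctly explains why one term is a relative defect, which the paper only resolves afterwards via Lemma~\ref{lemma:torus}.
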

We show that $d(N_a,\partial N_a,\rho_a)=d(N_a,\rho_a)$ in Lemma~\ref{lemma:torus}.
Thus the following formula also holds.
\begin{them}\label{maintheorem2}
$$d(N_a\cup N_b,\rho)=(\iota_a)_*d(N_a,\rho_a)+(\iota_b)_*d(N_b,\rho_b).$$
\end{them}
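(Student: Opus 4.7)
The plan is to deduce Theorem \ref{maintheorem2} from Theorem \ref{maintheorem} via Lemma \ref{lemma:torus}, which will assert $d(N,\partial N,\rho) = d(N,\rho)$ whenever $N$ has torus boundary and $\rho$, $\rho|_{\partial N}$ are acyclic. Granted this identity for $N_a$, substituting into the formula of Theorem \ref{maintheorem} yields Theorem \ref{maintheorem2} at once.

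The substantive task is therefore Lemma \ref{lemma:torus}, and my approach is as follows. Attach to $N$ a copy of the example collar $C=[-1,1]\times \partial N$ of Subsection \ref{example} by identifying $\{-1\}\times \partial N$ with $\partial N$. The resulting manifold $\tilde N = N\cup C$ is diffeomorphic to $N$ by a diffeomorphism $\phi$ sending $\partial \tilde N = \{1\}\times \partial N$ to $\partial N$, so under this identification $d(\tilde N,\partial \tilde N,\rho) = d(N,\partial N,\rho)$. Next choose a Morse function $\tilde f$ on $\tilde N$ realizing $\partial N$ as a regular level set at height $0$, with $\tilde f<0$ on the $N$ side and $\tilde f>0$ on the $C$ side, such that $\tilde f|_N$ is a valid $f_-$ Morse function on $N$ and $\tilde f|_C$ is a smooth rescaling of the example function $f_0$; then $\tilde f$ automatically has $f_+$ style at the new outer boundary $\partial \tilde N$.

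The key step is additivity. Since $\partial N$ lies in a regular level set of $\tilde f$ that contains no critical points and is crossed by no trajectory of $\grad \tilde f$, both the combinatorial propagator $\tilde G$ and the $1$-chain $\tilde c_{\tilde f}$ can be taken to split as $G^N\oplus G^C$ and $c^N + c^C$ respectively. Consequently $I_\circ(\tilde N,\rho;\grad \tilde f,\tilde G,\tilde c_{\tilde f})$ decomposes as $I_\circ(N,\rho;\grad \tilde f|_N, G^N, c^N)+I_\circ(C,\rho_C;\grad \tilde f|_C, G^C, c^C)$. Passing to homology classes modulo $H_1(-;\ZZ)$, the left-hand side represents $d(N,\partial N,\rho)$, while the right-hand side equals $d(N,\rho)+d(C,\partial C,\rho_C)=d(N,\rho)$, using the computation $d(C,\partial C,\rho_C)=0$ from Subsection \ref{example}. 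This yields Lemma \ref{lemma:torus} and hence Theorem \ref{maintheorem2}.

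The main obstacle is the additivity step itself, which requires essentially the technique behind the Section \ref{sec6} proof of Proposition \ref{mainprop}, adapted to the one-sided gluing $\tilde N = N\cup C$ where $C$ retains one boundary component. A smaller technical point is the smooth matching of $\tilde f|_N = -t_N$ with $\tilde f|_C = f_0$ along $\partial N$, whose normal derivatives do not agree; this is handled by a standard interpolation in a tubular neighborhood of $\partial N$, and by Proposition \ref{welldef of d} the resulting homology class is insensitive to the choice of interpolation.
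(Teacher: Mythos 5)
Your proposal is correct and follows essentially the same route as the paper: Theorem~\ref{maintheorem2} is deduced from Theorem~\ref{maintheorem} via Lemma~\ref{lemma:torus}, which is proved by gluing the example collar $[-1,1]\times\partial N$ (with vanishing invariant, by the computation of Subsection~\ref{example}) onto $N$ and invoking the additivity of $I_{\circ}$ under the gluing. The paper is in fact terser than you are about the two points you flag, namely the one-sided adaptation of Proposition~\ref{mainprop} and the smooth matching of the Morse functions along the gluing torus.
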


\section{Proof of Proposition~\ref{mainprop}}\label{sec6}
Let $(C_*,\partial_*)$ denote the Morse-Smale complex of $\grad f$ on $M$.
Here $C_*$ can be written as follows. 
As a vector space,
$$C_i=C_i^a\oplus C_i^b$$
for any integer $i$. 
Let $\pi^{u}:C_*=C_*^a\oplus C_*^b\to C_*^u$ be the projection for $u=a,b$.
Set 
$$\partial_*^{\UL}=\pi^b\circ \partial_*|_{C_*^a}~~, ~~
\partial_*^{\LU}=\pi^a\circ \partial_*|_{C_*^b}.$$
To align the notation, we denote by $\partial_*^{\UU}=\partial^a_*,
\partial_*^{\LL}=\partial^b$ in this section.
Since there are no trajectories from $N_b$ to $N_a$, $\partial^{\LU}_*=0$.
Then $\partial_*$ can be decomposed as
$$\partial_*=\partial_*^{\UU}\oplus \partial_*^{\LL}\oplus \partial_*^{\UL}.$$

The following lemma plays an important role in the proof of Proposition~\ref{mainprop}.
\begin{lemma}\label{keylemma}
There is a family of homomorphisms $G_*^{\UL}=\{G_k^{\UL}:C_{k-1}^a\to C_k^b\}_{k} $ such that $G^M_*=G_*^{a}\oplus G_*^{b}\oplus G_*^{\UL}$ is a propagator for $C_*(M)$.
\end{lemma}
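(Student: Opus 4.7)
The plan is to exploit the triangular structure induced by the one-way flow between the two pieces. Since $-\grad f$ carries trajectories from $N_a$ (where $f>0$) into $N_b$ (where $f<0$) but never the reverse, $\partial^{\LU}_*=0$, so with respect to the decomposition $C_i=C_i^a\oplus C_i^b$ the Morse-Smale differential has the lower-triangular block form
$$\partial_i=\begin{pmatrix}\partial^a_i & 0\\ \partial^{\UL}_i & \partial^b_i\end{pmatrix}.$$
I would then seek $G^M_*$ of the matching shape
$$G^M_i=\begin{pmatrix}G^a_i & 0\\ G^{\UL}_i & G^b_i\end{pmatrix},$$
and multiply out $\partial_{i+1}G^M_{i+1}+G^M_i\partial_i$ block by block. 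The two diagonal entries are exactly the propagator identities for $G^a$ and $G^b$, which already hold; the upper-right entry vanishes automatically. The whole lemma thus reduces to solving, for each $i$, the single off-diagonal equation
$$\partial^b_{i+1}G^{\UL}_{i+1}+G^{\UL}_i\partial^a_i=-\partial^{\UL}_{i+1}G^a_{i+1}-G^b_i\partial^{\UL}_i.$$

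The key structural observation is that $\partial^2=0$ applied to the block form above yields $\partial^b_i\partial^{\UL}_{i+1}+\partial^{\UL}_i\partial^a_{i+1}=0$, i.e.\ $\partial^{\UL}_*$ is a chain map $(C^a_*,\partial^a_*)\to(C^b_*,\partial^b_*)$, and a short computation shows that the right-hand side of the displayed equation is itself a chain map between these complexes. Both complexes are acyclic under our hypotheses: $(C^a_*,\partial^a_*)$ is the Morse-Smale complex of $\grad f_a$, which points inward on $\partial N_a$, so it computes $H_*(N_a,\partial N_a;V_{\rho_a})$, and this vanishes by the long exact sequence of the pair together with the acyclicity of $\rho_a$ and $\rho_a|_{\partial N_a}$; similarly $(C^b_*,\partial^b_*)$ computes $H_*(N_b;V_{\rho_b})=0$. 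Since any chain map between acyclic finitely generated complexes admits a chain null-homotopy, the required $G^{\UL}_*$ exists.

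For a hands-on proof I would instead exhibit the explicit candidate
$$G^{\UL}_i:=-G^b_i\circ\partial^{\UL}_i\circ G^a_i$$
and verify the off-diagonal equation directly: substitute $\partial^b_{i+1}G^b_{i+1}=\mathrm{id}-G^b_i\partial^b_i$ into $\partial^b_{i+1}G^{\UL}_{i+1}$, replace $\partial^b_i\partial^{\UL}_{i+1}$ by $-\partial^{\UL}_i\partial^a_{i+1}$, and then use $\partial^a_{i+1}G^a_{i+1}+G^a_i\partial^a_i=\mathrm{id}$ to collapse the middle terms; everything telescopes to exactly $-\partial^{\UL}_{i+1}G^a_{i+1}-G^b_i\partial^{\UL}_i$. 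The only real obstacle is sign bookkeeping: lining up the sign in the chain-map identity for $\partial^{\UL}$ with those produced by the two propagator identities, so that the candidate formula delivers the right-hand side with the correct overall sign rather than its negative.
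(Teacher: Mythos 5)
Your proof is correct, and it reaches the conclusion by a genuinely different and shorter route than the paper. Both arguments begin the same way: the block-triangular form of $\partial_*$ (from $\partial^{\LU}_*=0$) reduces the lemma to the single off-diagonal identity
$\partial^{b}_{k+1}\circ G^{\UL}_{k+1}+G^{\UL}_k\circ\partial^{a}_k=-\partial^{\UL}_{k+1}\circ G^{a}_{k+1}-G^{b}_k\circ\partial^{\UL}_k$.
From there the paper argues by induction on $k$, defining $G^{\UL}_{k+1}$ by the recursive formula
$G^{\UL}_{k+1}=-\bigl(G^{b}_{k+1}\circ\partial^{\UL}_{k+1}\circ G^{a}_{k+1}+G^{b}_{k+1}\circ G^{\UL}_k\circ\partial^{a}_k+G^{b}_{k+1}\circ G^{b}_k\circ\partial^{\UL}_k\bigr)$
and verifying the identity in two pieces after composing with $\partial^b_{k+1}G^b_{k+1}+G^b_k\partial^b_k={\rm id}$, the second piece using the inductive hypothesis. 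You instead observe that the right-hand side $R=-\partial^{\UL}G^{a}-G^{b}\partial^{\UL}$ is a degree-zero chain map $(C^a_*,\partial^a_*)\to(C^b_*,\partial^b_*)$ (this follows from $\partial^b_k\partial^{\UL}_{k+1}=-\partial^{\UL}_k\partial^a_{k+1}$ together with the two diagonal propagator identities), so the problem is exactly to null-homotope $R$; this is automatic because $C^b_*$ carries the contracting homotopy $G^b_*$, and indeed $G^bR$ is such a null-homotopy — only acyclicity of the \emph{target} complex is actually needed. Your explicit candidate $G^{\UL}_i=-G^b_i\circ\partial^{\UL}_i\circ G^a_i$ does satisfy the identity with the signs exactly as written: expanding $\partial^b_{k+1}G^{\UL}_{k+1}$ via $\partial^b_{k+1}G^b_{k+1}={\rm id}-G^b_k\partial^b_k$, replacing $\partial^b_k\partial^{\UL}_{k+1}$ by $-\partial^{\UL}_k\partial^a_{k+1}$, and collapsing $\partial^a_{k+1}G^a_{k+1}+G^a_k\partial^a_k={\rm id}$ leaves precisely $-\partial^{\UL}_{k+1}G^a_{k+1}-G^b_k\partial^{\UL}_k$, so the sign bookkeeping you worried about closes up and no induction is required. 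Your identifications of what the two subcomplexes compute, namely $H_*(N_a,\partial N_a;V_{\rho_a})$ and $H_*(N_b;V_{\rho_b})$, agree with the paper's remark on $f_+$ versus $f_-$, so the acyclicity claims follow from the stated hypotheses. In short, your argument yields a simpler closed-form $G^{\UL}$ and isolates the one fact that makes the lemma true (acyclicity of $C^b_*$), whereas the paper's induction constructs a more complicated propagator using the same ingredients.
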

A proof of Lemma~\ref{keylemma} is given in the next section.

Let $\mathcal M$ denote the set of trajectories of $-\grad f$.
For a trajectory $\gamma\in \mathcal M$, we denote by $s(\gamma),t(\gamma)\in M$ the initial point and the terminal point of $\gamma$ respectively. 
Note that there is no trajectory $\gamma$ satisfying $s(\gamma)\in N_b$ and $t(\gamma)\in N_a$.
Thus the set $\mathcal M$ is divided into three subsets:
$$ \mathcal M=\mathcal M^{\UU}\sqcup \mathcal M^{\LL} \sqcup \mathcal M^{\UL}.$$ 
Here 
\begin{itemize}
\item $\mathcal M^{\UU}=\{\gamma\mid s(\gamma),t(\gamma)\in N_a\}$,
\item $\mathcal M^{\LL}=\{\gamma\mid s(\gamma),t(\gamma)\in N_b\}$ and
\item 
$\mathcal M^{\UL}=\{\gamma\mid s(\gamma)\in N_a,t(\gamma)\in N_b\}$.
\end{itemize}

Let $G^{\overrightarrow{uv}}_*=\pi^v\circ G|_{C^u_*}:C_*^u\to C_*^v$ be the corresponding part of $G$ for $u,v=a,b$.
Then by the Lemma~\ref{keylemma}, $G_*^{\UU}=G_*^a,
G_*^{\LL}=G^b_*$ and $G_*^{\LU}=0$. Then we have
\begin{eqnarray*}
&&I_{\circ}(M,\rho;\grad f,G,c_a+c_b)\\
&=&-(c_a+c_b){\bf 1}-\sum_{\gamma\in \mathcal M}\gamma(1\otimes G_{s(\gamma),t(\gamma)}\circ \ep(\gamma)\gamma_*){\bf 1}_p\\
&=&-c_a{\bf 1}-\sum_{\gamma\in \mathcal M^{u}}\gamma(1\otimes G^{\UU}_{s(\gamma),t(\gamma)}\circ  \ep(\gamma)\gamma_*){\bf 1}_p
-c_b{\bf 1}-\sum_{\gamma\in \mathcal M^{l}}\gamma(1\otimes G^{\LL}_{s(\gamma),t(\gamma)}\circ  \ep(\gamma)\gamma_*){\bf 1}_p\\
&&-\sum_{\gamma\in \mathcal M^{\UL}}\gamma(1\otimes G^{\LU}_{s(\gamma),t(\gamma)}\circ  \ep(\gamma)\gamma_*){\bf 1}_p\\
&=&I_{\circ}(N_a,\rho_a;\grad f_a,G^{a},c_a)+I_{\circ}(N_b,\rho_b;\grad f_b,G^{b},c_b).
\end{eqnarray*}
Recall that $I_{\circ}(M,\rho;\grad f,G,c_a+c_b)$ is a cycle.
Since any trajectory in $\mathcal M^{a}$ does not intersect $\partial N_a$
and the 1-chain $c_a$ also does not intersect $\partial N_a$, the support of $I_{\circ}(N_a,\rho_a;\grad f_a,G^a,c_a)$ is in $N_a\setminus \partial N_a$. Similarly, the support of $I_{\circ}(N_b,\rho_b;\grad f_b,G^b,c_b)$ is in $N_b\setminus \partial N_b$.
Then $I_{\circ}(N_a,\rho_a;\grad f_a,G^a,c_a)$ does not intersect with 
$I_{\circ}(N_b,\rho_b;\grad f_b,G^b,c_b)$ in $M=N_a\cup N_b$.
Therefore the equality
$I_{\circ}(M,\rho;\grad f,G,c_a+c_b)=I_{\circ}(N_a,\rho_a;\grad f_a,G^a,c_a)+I_{\circ}(N_b,\rho_b;\grad f_b,G^b,c_b)$
implies that both $I_{\circ}(N_a,\rho_a;\grad f_a;G^a,c_a)$ and $I_{\circ}(N_b,\rho^l;\grad f_b;G^b,c_b)$ are cycles.
\subsection{Proof of Lemma~\ref{keylemma}}
Since $G_*^a=G^{\UU}_*,G_*^b=G_*^{\LL}$ are propagators, for each $k$, the required condition 
$\partial_{k+1}\circ G^M_{k+1}+G_k^M\circ \partial_{k}={\rm id}_{C_k(M)}$ for $G_k^M$
is equivalent with 
$$ \partial_{k+1}^{\UL}\circ G_{k+1}^{\UU}+\partial_{k+1}^{\LL}\circ G_{k+1}^{\UL}
+G_{k}^{\UL}\circ \partial_{k}^{\UU}+G_{k}^{\LL}\circ \partial_{k}^{\UL}
=0:C_{k}^a\to C_{k}^b.$$ 
We prove the existence of homomorphisms $\{G_k^{\UL}:C_{k-1}^a\to C_k^b\}$ satisfying the above conditions by induction on $k$.
$$\xymatrix{
 C_{k+1} \ar[d]^(0.5){\partial_{k+1}}& = &C_{k+1}^a\ar[d]^(0.5){\partial_{k+1}^{\UU}}\ar[drr]|(0.65){\partial_{k+1}^{\UL}}& \oplus 
& C_{k+1}^b \ar[d]_{\partial^{\LL}_{k+1}}\\
 C_{k} \ar[d]^(0.5){\partial_{k}} \ar@<0.5ex>@{.>}[u]^{G_{k+1}^M}
& = &C_{k}^a\ar@<0.5ex>@{.>}[u]^{G_{k+1}^{\UU}}\ar@{.>}[urr]|(0.35){G_{k+1}^{\UL}}
\ar[d]^(0.5){\partial_{k}^{\UU}}\ar[drr]|(0.65){\partial_{k}^{\UL}}
& \oplus 
& C_{k}^l\ar[d]_{\partial^{\LL}_{k}} \ar@<-0.5ex>@{.>}[u]_{G^{\LL}_{k+1}}\\
C_{k-1} \ar@<0.5ex>@{.>}[u]^{G_{k}^M} 
& = & C_{k-1}^a
\ar@<0.5ex>@{.>}[u]^{G_{k}^{\UU}}\ar@{.>}[urr]|(0.35){G_{k}^{\UL}}
\ar@<0.5ex>@{.>}[u]^{G^{\UU}_{k}}&
\oplus
 &C_{k-1}^b\ar@<-0.5ex>@{.>}[u]_{G^{\LL}_{k}}
}$$

We first prove that there exists a homomorphism $G^{\UL}_1:C_0^{a}\to C_1^{b}$ satisfying the condition
\begin{eqnarray}
\label{13}
\partial^{\UL}_1 \circ G^{\UU}_1+\partial_1^{\LL}\circ G_1^{\UL}+
G_0^{\UL}\circ \partial^{\UU}_0+G^{\LL}_0\circ \partial _0^{\UL} &=&0:C_0^a\to C_0^b
\end{eqnarray}
Since $C_{-1}^a=C_{-1}^b=\{0\}$, the equality (\ref{13}) is equivalent with 
\begin{eqnarray}\label{13b}
\partial^{\UL}_1\circ G^{\UU}_1+\partial^{\LL}_1\circ G_1^{\UL}=0:C_0^a\to C_0^b
\end{eqnarray}
Set $$G_1^{\UL}=-G_1^{\LL}\circ \partial_1^{\UL}\circ G_1^{\UU}.$$
\begin{remark}
The following two figures are a diagrammatic description of equality~(\ref{13b}) and the definition of $G_1^{\UL}$ respectively.
\begin{eqnarray*}
\left[\vcenter{\hbox{\xymatrix{
C_{1}^a\ar[dr] & C_{1}^b \\
C_{0}^a\ar@<0.5ex>@{.>}[u] & C_{0}^b
}}}\right]
+
\left[\vcenter{\hbox{\xymatrix{
C_{1}^a & C_{1}^b \ar[d]\\
C_{0}^a\ar@{.>}[ur]& C_{0}^b
}}}\right]
=0,
\hskip5mm
\left[\vcenter{\hbox{\xymatrix{
C_{1}^a & C_{1}^b\\
C_{0}^a\ar@{.>}[ur]|{G_1^{\UL}}& C_{0}^b 
}}}\right]=
-
\left[\vcenter{\hbox{\xymatrix{
C_{1}^a\ar[dr] & C_{1}^b\\
C_{0}^a\ar@<0.5ex>@{.>}[u]& C_{0}^b \ar@<-0.5ex>@{.>}[u]
}}}\right]
.\end{eqnarray*}

\end{remark}  
\vskip10mm
Since $G^{\LL}_*$ is a propagator and $C_{-1}^b=\{0\}$, we have
$$\partial^{\LL}_1\circ G^{\LL}_1={\rm id}_{C_0^b}.$$
Then 
\begin{eqnarray*}
&&\partial^{\UL}_1 \circ G^{\UU}_1+\partial_1^{\LL}\circ G_1^{\UL}\\
&=& \partial_1^{\UL}\circ G_1^{\UU}
-\partial^{\LL}_1\circ G^{\LL}_1 \circ \partial^{\UL}\circ G_1^{\UU}\\
&=&({\rm id}_{C_0^b}- \partial^{\LL}_1\circ G^{\LL}_1)\circ \partial_1^{\UL}\circ G_1^{\UU}
\\
&=&0.
\end{eqnarray*}

We next prove that there exists 
$G_{k+1}^{\UL}:C_k^a\to C_{k+1}^b$ satisfying 
\begin{eqnarray}\label{eq1}
\partial^{\UL}_{k+1}\circ G^{\UU}_{k+1}+\partial_{k+1}^{\LL}\circ G_{k+1}^{\UL}+
G_k^{\UL}\circ \partial^{\UU}_k+G^{\LL}_k\circ \partial _k^{\UL} &=&0:C_k^a\to C_{k}^b,
\end{eqnarray}
under the assumption that $G_1^{\UL},\cdots,G_{k}^{\UL}$ exist.

\begin{remark}
The following figure is a diagrammatic description of equality~(\ref{eq1}).

\begin{eqnarray*}
\left[\vcenter{\hbox{\xymatrix{
C_{k+1}^a\ar[dr] & C_{k+1}^b \\
C_{k}^a\ar@<0.5ex>@{.>}[u] & C_{k}^b\\
C_{k-1}^a&C_{k-1}^b
}}}\right]
+
\left[\vcenter{\hbox{\xymatrix{
C_{k+1}^a & C_{k+1}^b \ar[d]\\
C_{k}^a\ar@{.>}[ur]& C_{k}^b\\
C_{k-1}^a&C_{k-1}^b
}}}\right]
+
\left[\vcenter{\hbox{\xymatrix{
C_{k+1}^a & C_{k+1}^b \\
C_{k}^a\ar[d] & C_{k}^b\\
C_{k-1}^a\ar@{.>}[ur]  &C_{k-1}^b
}}}\right]
+
\left[\vcenter{\hbox{\xymatrix{
C_{k+1}^a& C_{k+1}^b \\
C_{k}^a\ar[dr]& C_{k}^b\\
C_{k-1}^a&C_{k-1}^b\ar@<-0.5ex>@{.>}[u]
}}}\right]
=0,
\end{eqnarray*}
\end{remark}  
\vskip10mm
Since $\partial_{k+1}^{\LL}\circ G_{k+1}^{\LL}+G_k^{\LL}\circ \partial_k^{\LL}= {\rm id}_{C_k^b}$, he equality (\ref{eq1}) is equivalent with 
\begin{eqnarray*}
(\partial_{k+1}^{\LL}\circ G_{k+1}^{\LL}+G_k^{\LL}\circ \partial_k^{\LL})\circ(\partial^{\UL}_{k+1}\circ  G^{\UU}_{k+1}+\partial_{k+1}^{\LL}\circ G_{k+1}^{\UL}+
G_k^{\UL}\circ \partial^{\UU}_k+G^{\LL}_k\circ \partial _k^{\UL})=0.
\end{eqnarray*}
Then it is enough to show that there exists a homomorphism $G^{\UL}_{k+1}$ satisfying both
$$
\partial_{k+1}^{\LL}\circ G_{k+1}^{\LL}\circ (\partial^{\UL}_{k+1}\circ  G^{\UU}_{k+1}+\partial_{k+1}^{\LL}\circ G_{k+1}^{\UL}+
G_k^{\UL}\circ \partial^{\UU}_k+G^{\LL}_k\circ \partial _k^{\UL})=0$$
and
$$
G_k^{\LL}\circ \partial_k^{\LL}\circ (\partial^{\UL}_{k+1}\circ  G^{\UU}_{k+1}+\partial_{k+1}^{\LL}\circ G_{k+1}^{\UL}+
G_k^{\UL}\circ \partial^{\UU}_k+G^{\LL}_k\circ \partial _k^{\UL})=0.$$
Set 
\begin{eqnarray}\label{eq2}
G^{\UL}_{k+1}=-\left(G_{k+1}^{\LL}\circ \partial_{k+1}^{\UL}\circ G_{k+1}^{\UU}
+G_{k+1}^{\LL}\circ G_k^{\UL}\circ\partial_{k}^{\UU}
+G_{k+1}^{\LL}\circ G_k^{\LL}\circ \partial_{k}^{\UL}\right).
\end{eqnarray}

\begin{remark}
The following figure is a diagrammatic description of the definition~(\ref{eq2}).

\begin{eqnarray*}
G_{k+1}^{\UL}=
-
\left[\vcenter{\hbox{\xymatrix{
C_{k+1}^a\ar[dr] & C_{k+1}^b\\
C_{k}^a\ar@<0.5ex>@{.>}[u]& C_{k}^b \ar@<-0.5ex>@{.>}[u]\\
C_{k-1}^a&C_{k-1}^b
}}}\right]
-
\left[
\vcenter{\hbox{\xymatrix{
C_{k+1}^a& C_{k+1}^b\\
C_{k}^a\ar[d]& C_{k}^b\ar@<-0.5ex>@{.>}[u]\\
C_{k-1}^a\ar@{.>}[ur]&C_{k-1}^b
}}}\right]
-
\left[
\vcenter{\hbox{\xymatrix{
C_{k+1}^a& C_{k+1}^b\\
C_{k}^a\ar[dr]& C_{k}^b\ar@<-0.5ex>@{.>}[u]\\
C_{k-1}^a&C_{k-1}^b\ar@<-0.5ex>@{.>}[u]
}}}\right]
.\end{eqnarray*}
\end{remark}  
\vskip10mm
\begin{lemma}
For the above $G_{k+1}^{\UL}$,
$$\partial_{k+1}^{\LL}\circ G_{k+1}^{\LL}\circ (\partial^{\UL}_{k+1}\circ  G^{\UU}_{k+1}+\partial_{k+1}^{\LL}\circ G_{k+1}^{\UL}+
G_k^{\UL}\circ \partial^{\UU}_k+G^{\LL}_k\circ \partial _k^{\UL})=0.$$
\end{lemma}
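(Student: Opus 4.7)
The plan is to carry out a direct computation, using the defining formula~(\ref{eq2}) for $G_{k+1}^{\UL}$ together with the propagator relation for $G^{\LL}$ to show that the four terms appearing inside the outer parentheses cancel after composition with $P:=\partial_{k+1}^{\LL}\circ G_{k+1}^{\LL}$.

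First I would establish the auxiliary identity $P\circ \partial_{k+1}^{\LL}=\partial_{k+1}^{\LL}$. This follows by composing the propagator relation $\partial_{k+1}^{\LL}\circ G_{k+1}^{\LL}+G_k^{\LL}\circ \partial_k^{\LL}={\rm id}_{C_k^b}$ on the right with $\partial_{k+1}^{\LL}$, and then using $\partial_k^{\LL}\circ \partial_{k+1}^{\LL}=0$ to kill the second summand. Applied to the middle term of the expression in the lemma, this immediately gives
\[
P\circ \partial_{k+1}^{\LL}\circ G_{k+1}^{\UL}=\partial_{k+1}^{\LL}\circ G_{k+1}^{\UL}.
\]

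Next I would substitute the explicit definition~(\ref{eq2}) of $G_{k+1}^{\UL}$ into this expression, obtaining
\[
\partial_{k+1}^{\LL}\circ G_{k+1}^{\UL}=-P\circ \partial_{k+1}^{\UL}\circ G_{k+1}^{\UU}-P\circ G_k^{\UL}\circ \partial_k^{\UU}-P\circ G_k^{\LL}\circ \partial_k^{\UL},
\]
since each of the three summands in the definition of $G_{k+1}^{\UL}$ begins on the left with $G_{k+1}^{\LL}$, producing the factor $P$ after prepending $\partial_{k+1}^{\LL}$. Plugging this back into the full expression
\[
P\circ\bigl(\partial^{\UL}_{k+1}\circ G^{\UU}_{k+1}+\partial_{k+1}^{\LL}\circ G_{k+1}^{\UL}+G_k^{\UL}\circ \partial^{\UU}_k+G^{\LL}_k\circ \partial _k^{\UL}\bigr)
\]
yields term-by-term cancellation: the contribution $P\circ \partial_{k+1}^{\UL}\circ G_{k+1}^{\UU}$ cancels with $-P\circ \partial_{k+1}^{\UL}\circ G_{k+1}^{\UU}$, the contribution $P\circ G_k^{\UL}\circ \partial_k^{\UU}$ cancels with $-P\circ G_k^{\UL}\circ \partial_k^{\UU}$, and similarly for the last pair.

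The computation is entirely formal and I do not anticipate any real obstacle; the main piece of bookkeeping is the key identity $P\circ \partial_{k+1}^{\LL}=\partial_{k+1}^{\LL}$, which is what makes the middle term collapse so that the explicit formula for $G_{k+1}^{\UL}$ can be exploited. The rest is purely a matter of writing out the substitution cleanly.
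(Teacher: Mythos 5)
Your argument is correct and is essentially the paper's own proof in a slightly reorganized form: the paper also reduces everything to the identity $\partial_{k+1}^{\LL}\circ G_{k+1}^{\LL}\circ\partial_{k+1}^{\LL}=\partial_{k+1}^{\LL}$ (obtained there from the propagator relation one degree up rather than one degree down, which is equivalent) and then cancels against the defining formula~(\ref{eq2}) for $G_{k+1}^{\UL}$. No gap; the term-by-term cancellation you describe matches the paper's factoring of $\partial_{k+1}^{\LL}$ out of the sum.
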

\begin{proof}
\begin{eqnarray*}
&&\partial_{k+1}^{\LL}\circ G_{k+1}^{\LL}\circ (\partial^{\UL}_{k+1}\circ  G^{\UU}_{k+1}+\partial_{k+1}^{\LL}\circ G_{k+1}^{\UL}+
G_k^{\UL}\circ \partial^{\UU}_k+G^{\LL}_k\circ \partial _k^{\UL})\\
&=&\partial_{k+1}^{\LL}\circ G_{k+1}^{\LL}\circ \partial^{\UL}_{k+1}\circ  G^{\UU}_{k+1}
+\partial_{k+1}^{\LL}\circ (G_{k+1}^{\LL}\circ\partial_{k+1}^{\LL})\circ G_{k+1}^{\UL}\\
&&\hskip15mm+\partial_{k+1}^{\LL}\circ G_{k+1}^{\LL}\circ G_k^{\UL}\circ \partial^{\UU}_k
+\partial_{k+1}^{\LL}\circ G_{k+1}^{\LL}\circ G^{\LL}_k\circ \partial _k^{\UL}\\
&=&\partial_{k+1}^{\LL}\circ G_{k+1}^{\LL}\circ \partial^{\UL}_{k+1}\circ  G^{\UU}_{k+1}
+\partial_{k+1}^{\LL}\circ ({\rm id}_{C_{k+1}^b}-\partial_{k+2}^{\LL}\circ G_{k+2}^{\LL})\circ G_{k+1}^{\UL}\\
&&\hskip15mm+\partial_{k+1}^{\LL}\circ G_{k+1}^{\LL}\circ G_k^{\UL}\circ \partial^{\UU}_k
+\partial_{k+1}^{\LL}\circ G_{k+1}^{\LL}\circ G^{\LL}_k\circ \partial _k^{\UL}\\
&=&\partial_{k+1}^{\LL}\circ G_{k+1}^{\LL}\circ \partial^{\UL}_{k+1}\circ  G^{\UU}_{k+1}
+\partial_{k+1}^{\LL}\circ G_{k+1}^{\UL}\\
&&\hskip15mm+\partial_{k+1}^{\LL}\circ G_{k+1}^{\LL}\circ G_k^{\UL}\circ \partial^{\UU}_k
+\partial_{k+1}^{\LL}\circ G_{k+1}^{\LL}\circ G^{\LL}_k\circ \partial _k^{\UL}\\
&=&\partial_{k+1}^{\LL}\circ (G_{k+1}^{\LL}\circ \partial^{\UL}_{k+1}\circ  G^{\UU}_{k+1}
+G_{k+1}^{\UL}+G_{k+1}^{\LL}\circ G_k^{\UL}\circ \partial^{\UU}_k+G_{k+1}^{\LL}\circ G^{\LL}_k\circ \partial _k^{\UL})\\
&=&0.
\end{eqnarray*}
\end{proof}
\begin{lemma}
The above ${G}_{k+1}^{\UL}:C_k^a\to C_{k+1}^b$ satisfies
$$
G_k^{\LL}\circ \partial_k^{\LL}\circ (\partial^{\UL}_{k+1}\circ  G^{\UU}_{k+1}+\partial_{k+1}^{\LL}\circ G_{k+1}^{\UL}+
G_k^{\UL}\circ \partial^{\UU}_k+G^{\LL}_k\circ \partial_k^{\UL})=0.$$
\end{lemma}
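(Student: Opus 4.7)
The plan is to expand $G_k^{\LL}\circ \partial_k^{\LL}\circ S$, where $S$ denotes the four-term sum in parentheses, and collapse each piece using three ingredients: the identity $\partial\circ\partial=0$ on $C_*(M)$ (which, together with $\partial^{\LU}=0$, yields the mixed identities $\partial^{\LL}\partial^{\UL}+\partial^{\UL}\partial^{\UU}=0$ and $\partial^{\UU}\partial^{\UU}=0$ at every level), the propagator equations for $G^{\UU}$ and $G^{\LL}$, and the inductive hypothesis that $G_k^{\UL}$ satisfies the analogous relation one level down:
$$\partial_k^{\UL}G_k^{\UU}+\partial_k^{\LL}G_k^{\UL}+G_{k-1}^{\UL}\partial_{k-1}^{\UU}+G_{k-1}^{\LL}\partial_{k-1}^{\UL}=0.$$

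Expanding term by term, the summand $G_k^{\LL}\partial_k^{\LL}\partial_{k+1}^{\LL}G_{k+1}^{\UL}$ vanishes directly by $\partial^{\LL}\partial^{\LL}=0$, and the summand $G_k^{\LL}\partial_k^{\LL}\partial_{k+1}^{\UL}G_{k+1}^{\UU}$ converts to $-G_k^{\LL}\partial_k^{\UL}\partial_{k+1}^{\UU}G_{k+1}^{\UU}$ via the mixed $\partial^2=0$ identity. In the third summand $G_k^{\LL}\partial_k^{\LL}G_k^{\UL}\partial_k^{\UU}$, I substitute the induction hypothesis to rewrite $\partial_k^{\LL}G_k^{\UL}$ as a sum of three terms; the $G_{k-1}^{\UL}\partial_{k-1}^{\UU}\partial_k^{\UU}$ piece drops out from $\partial^{\UU}\partial^{\UU}=0$, while $G_{k-1}^{\LL}\partial_{k-1}^{\UL}\partial_k^{\UU}$ becomes $+G_k^{\LL}G_{k-1}^{\LL}\partial_{k-1}^{\LL}\partial_k^{\UL}$ after applying the mixed identity at the previous level. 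For the fourth summand $G_k^{\LL}\partial_k^{\LL}G_k^{\LL}\partial_k^{\UL}$, the propagator property gives $\partial_k^{\LL}G_k^{\LL}={\rm id}-G_{k-1}^{\LL}\partial_{k-1}^{\LL}$, producing $G_k^{\LL}\partial_k^{\UL}-G_k^{\LL}G_{k-1}^{\LL}\partial_{k-1}^{\LL}\partial_k^{\UL}$; the second piece exactly cancels the residual contribution from the third summand.

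What remains is
$$G_k^{\LL}\partial_k^{\UL}\bigl({\rm id}-\partial_{k+1}^{\UU}G_{k+1}^{\UU}-G_k^{\UU}\partial_k^{\UU}\bigr),$$
which vanishes by the propagator equation for $G^{\UU}$ at level $k$. Together with the preceding lemma, this yields $(\partial_{k+1}^{\LL}G_{k+1}^{\LL}+G_k^{\LL}\partial_k^{\LL})\circ S={\rm id}\circ S=0$, closing the induction on $k$. The main obstacle is purely organizational: every reduction introduces a sign change or an index shift, and one must apply the induction hypothesis at precisely the level where the cascade of $\partial^2=0$ substitutions lines up for the cross-terms to cancel.
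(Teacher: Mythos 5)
Your proof is correct and follows essentially the same route as the paper's: a direct expansion of the four-term sum using the component identities of $\partial\circ\partial=0$, the propagator equations for $G^{\UU}$ and $G^{\LL}$, and the induction hypothesis for $G_k^{\UL}$. The only difference is the order of substitution (the paper applies the $G^{\UU}$ propagator equation at level $k+1$ early and finishes with the mixed $\partial^2=0$ identity, whereas you finish with the $G^{\UU}$ propagator equation), which is a cosmetic regrouping of the same cancellations.
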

\begin{proof}
\begin{eqnarray*}
&&G_k^{\LL}\circ \partial_k^{\LL}\circ (\partial^{\UL}_{k+1}\circ  G^{\UU}_{k+1}+\partial_{k+1}^{\LL}\circ G_{k+1}^{\UL}+
G_k^{\UL}\circ \partial^{\UU}_k+G^{\LL}_k\circ \partial_k^{\UL})\\
&=&G_k^{\LL}\circ (\partial_k^{\LL}\circ \partial^{\UL}_{k+1})\circ  G^{\UU}_{k+1}+
G_k^{\LL}\circ \partial_k^{\LL}\circ G_k^{\UL}\circ \partial^{\UU}_k+G_k^{\LL}\circ \partial_k^{\LL}\circ G^{\LL}_k\circ \partial_k^{\UL}.
\end{eqnarray*}
Since $\partial_{k}\circ \partial_{k+1}=0$, 
we have $\partial_k^{\LL}\circ \partial^{\UL}_{k+1}=-\partial^{\UL}_{k}\circ \partial_{k+1}^{\UU}$. Then
\begin{eqnarray*}
&&G_k^{\LL}\circ (\partial_k^{\LL}\circ \partial^{\UL}_{k+1})\circ  G^{\UU}_{k+1}+G_k^{\LL}\circ \partial_k^{\LL}\circ G_k^{\UL}\circ \partial^{\UU}_k+G_k^{\LL}\circ \partial_k^{\LL}\circ G^{\LL}_k\circ \partial_k^{\UL}\\
&=&G_k^{\LL}\circ (-\partial_k^{\UL}\circ \partial^{\UU}_{k+1})\circ  G^{\UU}_{k+1}+G_k^{\LL}\circ \partial_k^{\LL}\circ G_k^{\UL}\circ \partial^{\UU}_k+G_k^{\LL}\circ \partial_k^{\LL}\circ G^{\LL}_k\circ \partial _k^{\UL}\\
&=&-G_k^{\LL}\circ \partial_k^{\UL}\circ (\partial^{\UU}_{k+1}\circ  G^{\UU}_{k+1})+G_k^{\LL}\circ \partial_k^{\LL}\circ G_k^{\UL}\circ \partial^{\UU}_k+G_k^{\LL}\circ \partial_k^{\LL}\circ G^{\LL}_k\circ \partial_k^{\UL}\\
&=&-G_k^{\LL}\circ \partial_k^{\UL}\circ ({\rm id}_{C_k^a}-G_k^{\UU}\circ \partial^{\UU}_{k})+G_k^{\LL}\circ \partial_k^{\LL}\circ G_k^{\UL}\circ \partial^{\UU}_k+G_k^{\LL}\circ \partial_k^{\LL}\circ G^{\LL}_k\circ \partial_k^{\UL}\\
&=&-G_k^{\LL}\circ \partial_k^{\UL}+G_k^{\LL}\circ \partial_k^{\UL}\circ G_k^{\UU}\circ \partial^{\UU}_{k}+G_k^{\LL}\circ \partial_k^{\LL}\circ G_k^{\UL}\circ \partial^{\UU}_k+G_k^{\LL}\circ \partial_k^{\LL}\circ G^{\LL}_k\circ \partial_k^{\UL}\\
&=&-G_k^{\LL}\circ \partial_k^{\UL}+G_k^{\LL}\circ (\partial_k^{\UL}\circ G_k^{\UU}+\partial_k^{\LL}\circ G_k^{\UL})\circ \partial^{\UU}_k+G_k^{\LL}\circ \partial_k^{\LL}\circ G^{\LL}_k\circ \partial_k^{\UL}.
\end{eqnarray*}
By the assumption of the induction, $G_k^{\UL}$ satisfying 
\begin{eqnarray*}
\partial^{\UL}_{k}\circ G^{\UU}_{k}+\partial_{k}^{\LL}\circ G_{k}^{\UL}+
G_{k-1}^{\UL}\circ \partial^{\UU}_{k-1}+G^{\LL}_{k-1}\circ \partial_{k-1}^{\UL} &=&0:C_{k-1}^a\to C_{k-1}^b.
\end{eqnarray*}
Thus we have
\begin{eqnarray*}
&&-G_k^{\LL}\circ \partial_k^{\UL}+G_k^{\LL}\circ (\partial_k^{\UL}\circ G_k^{\UU}+\partial_k^{\LL}\circ G_k^{\UL})\circ \partial^{\UU}_k+G_k^{\LL}\circ \partial_k^{\LL}\circ G^{\LL}_k\circ \partial_k^{\UL}\\
&=&-G_k^{\LL}\circ \partial_k^{\UL}+G_k^{\LL}\circ (-G_{k-1}^{\UL}\circ \partial^{\UU}_{k-1}-G^{\LL}_{k-1}\circ \partial_{k-1}^{\UL})\circ \partial^{\UU}_k+G_k^{\LL}\circ \partial_k^{\LL}\circ G^{\LL}_k\circ \partial_k^{\UL}\\
&=&-G_k^{\LL}\circ \partial_k^{\UL}-G_k^{\LL}\circ G^{\LL}_{k-1}\circ \partial_{k-1}^{\UL}\circ \partial^{\UU}_k+G_k^{\LL}\circ (\partial_k^{\LL}\circ G^{\LL}_k)\circ \partial_k^{\UL}\\
&=&-G_k^{\LL}\circ \partial_k^{\UL}-G_k^{\LL}\circ G^{\LL}_{k-1}\circ \partial_{k-1}^{\UL}\circ \partial^{\UU}_k+G_k^{\LL}\circ ({\rm id}_{C_{k-1}^{\LL}}-G^{\LL}_{k-1}\circ \partial_{k-1}^{\LL})\circ \partial _k^{\UL}\\
&=&-G_k^{\LL}\circ G^{\LL}_{k-1}\circ \partial_{k-1}^{\UL}\circ \partial^{\UU}_k-G_k^{\LL}\circ G^{\LL}_{k-1}\circ \partial_{k-1}^{\LL}\circ \partial_k^{\UL}\\
&=&-G_k^{\LL}\circ G^{\LL}_{k-1}\circ (\partial_{k-1}^{\UL}\circ \partial^{\UU}_k+
\partial_{k-1}^{\LL}\circ \partial_k^{\UL})\\
&=& -G_k^{\LL}\circ G^{\LL}_{k-1}\circ 0\\
&=&0.
\end{eqnarray*}
\end{proof}
\begin{remark}
The first two lines of the above proof imply that 
$$
G_k^{\LL}\circ \partial_k^{\LL}\circ (\partial^{\UL}_{k+1}\circ  G^{\UU}_{k+1}+\partial_{k+1}^{\LL}\circ \widetilde{\bf G}_{k+1}^{\UL}+
G_k^{\UL}\circ \partial^{\UU}_k+G^{\LL}_k\circ \partial _k^{\UL})=0$$
for any homomorphism $\widetilde{\bf G}_{k+1}^{\UL}:C_k^a\to C_{k+1}^b$.
\end{remark}

\section{Proof of Proposition~\ref{welldef of d}}\label{proof:welldef of d} 
Let us denote by $-N$ the manifold $N$ with the opposite orientation.
Take two collar neighborhoods as
$[0,1)\times \partial N\subset N$ with $\{0\}\times \partial N=\partial N$
and $(-1,0]\times \partial N\subset -N$ with $\{0\}\times \partial N=\partial(-N)$ respectively.

Let $f^+_1,f^+_2:(N,\partial N)\to (\RR,0)$ be Morse functions satisfying
$f_i^+|_{[0,1)\times \partial N}(t,x)=t$ for $i=1,2$.
Let $f^-_1,f_2^-:(-N,\partial(-N))\to (\RR,0)$ be Morse functions satisfying
$f_i^-|_{(-1,0]\times \partial N}(t,x)=t$ for $i=1,2$.
Take gradient like vector fields $\grad f^+_1,\grad f^+_2,\grad f^-_1$ and $\grad f^-_2$ satisfying
$\grad f_1^+|_{[0,1)\times \partial N}=\grad f_2^+|_{[0,1)\times \partial N}=\frac{\partial}{\partial t}$ and 
$\grad f^-_1|_{(-1,0]\times \partial N}=\grad f^-_2|_{(-1,0]\times \partial N}=\frac{\partial}{\partial t}$.

Let $D(N)=N\cup -N$ be the double of $N$.
Let $\pi:D(N)\to N$ be a continuous map given by $\pi(x)=x$ for $x\in N$ and $\pi(y)=y$ for $y\in -N$.
Obviously, a representation $\rho\circ \pi_*$ of $\pi_1(D(N))$ satisfying $\rho\circ\pi_*|_{N}=\rho, \rho\circ\pi_*|_{-N}=\rho$.
Since $\rho$ and $\rho|_{\partial N}$ are acyclic, $\rho\circ\pi_*$ is also acyclic.

There are two Morse functions 
$f_1^+\cup f^-_1,f_2^+\cup f^-_1:N\cup -N\to \RR$ and these gradient like vector fields $\grad f_1^+\cup \grad f^-_1, \grad f_2^+\cup \grad f^-_1$. 
Take propagators $G_1^+,G_2^+$, $G^-_1$ and $G^-_2$ for the Morse-Smale complex of $\grad f^+_1,\grad f^+_2$, $\grad f^-_1$ and $\grad f^-_2$ respectively.
Take 1-chains $c_1^+,c_2^+,c_1^-,c_2^-\in C_1(N;\ZZ)$
satisfying $\partial c_1^+={\rm Crit}(f_1^+)$,
$\partial c_2^+={\rm Crit}(f_2^+)$,
$\partial c_1^-={\rm Crit}(f_1^-)$ and
$\partial c_2^-={\rm Crit}(f_2^-)$.
From Proposition~\ref{mainprop}, we have
\begin{eqnarray*}
d(D(N),\rho\circ\pi_*)
&=&[I_{\circ}(N,\rho;\grad f^+_1,G_1^+,c_1^+)+I_{\circ}(-N,\rho;\grad f^-_1,G^-_1,c_1^-)]\\
&=&[I_{\circ}(N,\rho;\grad f^+_2,G_2^+,c_2^+)+I_{\circ}(-N,\rho;\grad f^-_1,G^-_1,c_1^-)].
\end{eqnarray*}
Then we have 
\begin{eqnarray*}
&&[I_{\circ}(N,\rho;\grad f^+_1,G_1^+,c_1^+)-I_{\circ}(N,\rho;\grad f^+_2,G_2^+,c_2^+)]\\
&&\hskip30mm=0\in H_1(D(N);V_{(\rho\circ\pi_*)^*}\otimes V_{\rho\circ\pi_*})/H_1(D(N);\ZZ).
\end{eqnarray*}
By using Mayer-Vietoris arguments, we can easily check that 
 $H_1(N;V_{\rho^*}\otimes V_{\rho})/H_1(N;\ZZ)\to 
H_1(D(N);V_{(\rho\circ\pi_*)^*}\otimes V_{\rho\circ\pi_*})/
H_1(D(N):\ZZ)$ is injection.
Then 
$$[I_{\circ}(N,\rho;\grad f^+_1,G_1^+,c_1^+)]=[I_{\circ}(N,\rho;\grad f^+_2,G_2^+,c_2^+)]\in H_1(N;V_{\rho^*}\otimes V_{\rho}).$$
Similarly, 
$$[I_{\circ}(N,\rho;\grad f^-_1,G_1^-,c_1^-)]=[I_{\circ}(N,\rho;\grad f_2^-,G_2^-,c_2^-)].$$
Furthermore, the following lemma holds.
\begin{lemma}\label{lemma:torus}
$d(N,\partial N,\rho)=d(N,\rho)$.
\end{lemma}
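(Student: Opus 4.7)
The idea is to insert the cylinder $W = [-1,1]\times T^2$ of the Example along $\partial N$ to bridge between the $f_-$-type and $f_+$-type Morse-theoretic computations. Let $N' = N\cup_\phi W$, where $\phi:\partial N\xrightarrow{\sim}\{-1\}\times T^2$ is an orientation-compatible identification. Since $W$ is a thickened collar of $\partial N$, a collar-absorption diffeomorphism $\Psi:N'\to N$ satisfies $\Psi\circ\iota_N\simeq{\rm id}_N$, where $\iota_N:N\hookrightarrow N'$ is the inclusion. It therefore suffices to show that
\[
d(N',\partial N',\rho')=(\iota_N)_*d(N,\rho)\ \in\ H_1(N';V_{\rho^*}\otimes V_\rho)/H_1(N';\ZZ),
\]
since applying $\Psi_*$ then yields the lemma.

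On $N'$ I choose a Morse function $f'$ that restricts to a fixed $f_-$-type function on $N$ and to the Example's $f_0$ on $W$, smoothed (without introducing new critical points) in a small collar neighborhood of the interface $\partial N$. At the new boundary $\{1\}\times T^2$ the function $f'$ agrees with $f_0$ and is of $f_+$-type, so $[I_\circ(N',\rho;\grad f',G',c_{f'})]$ represents $d(N',\partial N',\rho')$. At the interface, $\grad f_-$ points out of $N$ while $\grad f_0$ points into $W$, so $-\grad f'$ flows only from $W$ into $N$; no trajectory of $-\grad f'$ goes from $N$ into $W$.

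I then apply the argument of Proposition~\ref{mainprop} to the decomposition $N'=W\cup N$ with $N_a=W$ (the $f_+$-side) and $N_b=N$ (the $f_-$-side). The proof of Lemma~\ref{keylemma} and the chain-level bookkeeping in Proposition~\ref{mainprop} use only the vanishing $\partial^{\LU}_*=0$ together with acyclicity of the two Morse--Smale complexes; closedness of the combined manifold is never invoked. Acyclicity is guaranteed: $H_*(N;V_\rho)=0$ by the hypothesis on $\rho$, and $H_*(W,\partial W;V_\rho)=0$ by Poincar\'e--Lefschetz duality together with the acyclicity of $\rho|_{\partial N}$. Hence there are combinatorial propagators $G^N,G^W$ and an extension $G^{\UL}$ with $G^{\LU}=0$; choosing $c_{f'}=c_{f_-}+c_{f_0}$ with disjoint supports in the interiors of $N$ and $W$ (possible since $\chi(N)=\chi(W)=0$), the same computation as in Section~\ref{sec6} gives
\[
I_\circ(N',\rho;\grad f',G',c_{f'})
=I_\circ(N,\rho;\grad f_-,G^N,c_{f_-})
+I_\circ(W,\rho;\grad f_0,G^W,c_{f_0}).
\]

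Passing to $H_1(N';V_{\rho^*}\otimes V_\rho)/H_1(N';\ZZ)$ and invoking the Example's computation $[I_\circ(W,\rho;\grad f_0,G^W,c_{f_0})]=d(W,\partial W,\rho)=0$, I obtain $d(N',\partial N',\rho')=(\iota_N)_*d(N,\rho)$, which proves the lemma after applying $\Psi_*$. The main obstacle is the chain-level decomposition of $I_\circ$ when the ambient manifold has a boundary rather than being closed; this goes through because Lemma~\ref{keylemma} is a purely algebraic statement about the Morse--Smale complex, and the acyclicity hypotheses on $\rho$ and $\rho|_{\partial N}$ supply the acyclicity of the two pieces that its argument requires.
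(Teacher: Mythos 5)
Your proposal is correct and follows essentially the same route as the paper: glue the cylinder $W=[-1,1]\times T^2$ of the Example onto $\partial N$ to get $N'\cong N$, combine an $f_-$-type function on $N$ with $f_0$ on $W$ to produce an $f_+$-type function on $N'$, split $I_\circ$ as in Proposition~\ref{mainprop}, and invoke the Example's vanishing $d(W,\partial W,\rho)=0$. The extra care you take with the direction of the flow at the interface and with the acyclicity needed to run Lemma~\ref{keylemma} on a manifold with boundary is a point the paper leaves implicit, but it is the same argument.
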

\begin{proof}[Proof of Lemma~\ref{lemma:torus}]
Without loss of generality, we assume that $N$ has only one boundary component: $\partial N=T^2$.
Let $f_0:([-1,1]\times \partial N,\{-1,1\}\times \partial N)\to (\RR,0)$ be  
the Morse function introduced in the example given in Section~\ref{example}. From the example, we have
$$d([-1,1]\times \partial N,\{-1,1\}\times \partial N,\rho)=0.$$
Let $N$ be the manifold obtained  by gluing $N$ and $[-1,1]\times \partial N$ along $N\supset \partial N$ and 
${-1}\times \partial N\subset [-1,1]\times \partial N$. Obviously, $N'\cong N$. 
The Morse functions $f^-_1$ on  $N$ and $f^0$ on $[-1,1]\times \partial N$ give a Morse function on $N'$.

\includegraphics[width=8cm]{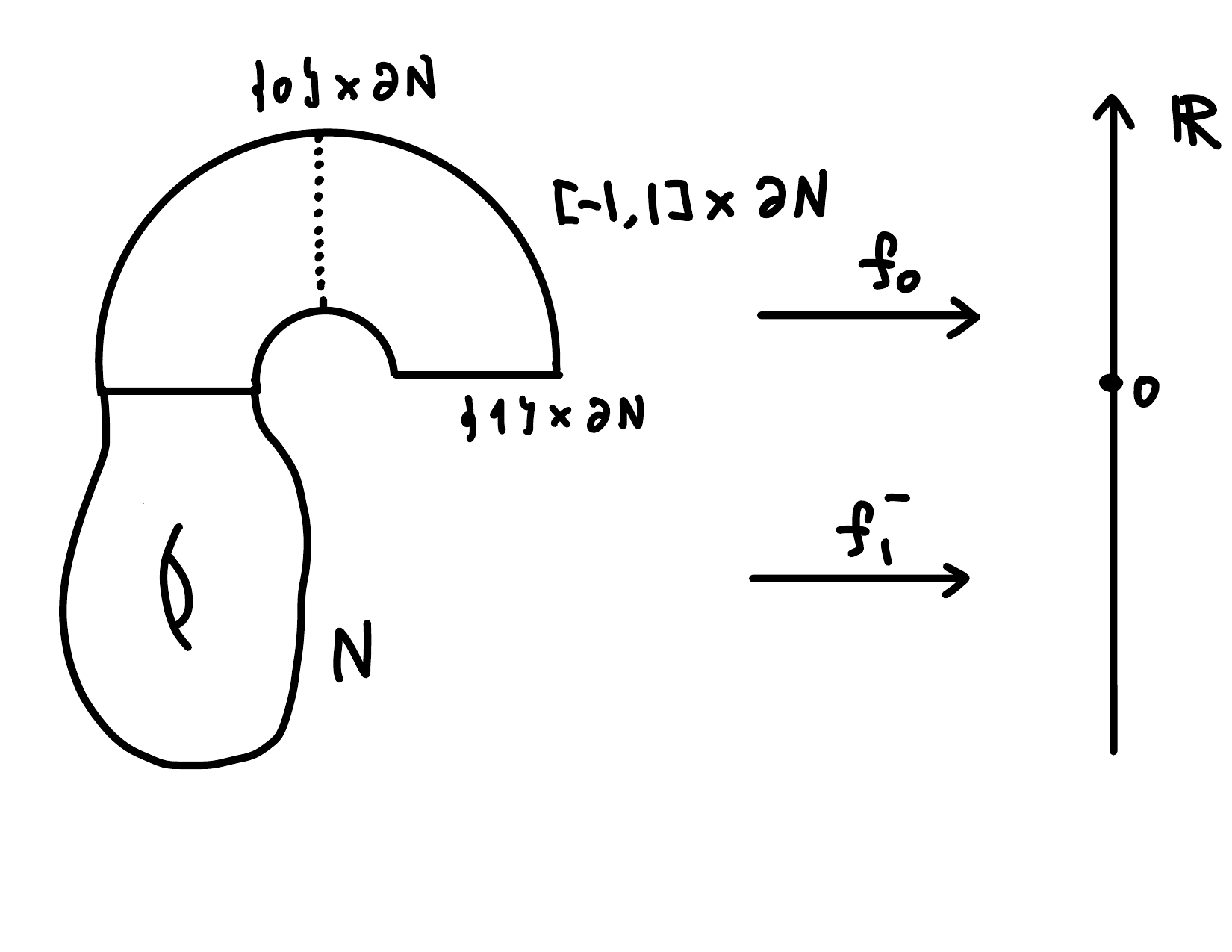}\label{pict2}

The Morse function $f^-_1\cup f_0$ gives $d(N',\partial N',\rho)$. Then we have
\begin{eqnarray*}
d(N,\partial N,\rho)&=&d(N',\partial N',\rho)\\
&=&d(N,\rho)+d([-1,1]\times \partial N,\{-1,1\}\times \partial N, \rho)\\
&=&d(N,\rho).
\end{eqnarray*}
\end{proof}


\end{document}